\newtheorem{thm}{Theorem}[section]
\newtheorem{cor}[thm]{Corollary}
\newtheorem{lem}[thm]{Lemma}
\newtheorem{definition}[thm]{Definition}
\newcommand{\R}{{\mathbb{R}}}
\numberwithin{equation}{section}
\begin{document}
\title[Regularity and symmetry results]{Regularity and symmetry results for nonlinear degenerate elliptic equations}
\author{Francesco Esposito*, Berardino Sciunzi* and Alessandro Trombetta*}
\address{* Dipartimento di Matematica, UNICAL, Ponte Pietro Bucci 31B, 87036
Arcavacata di Rende, Cosenza, Italy.}

\email{esposito@mat.unical.it}

\email{sciunzi@mat.unical.it}
	
\email{alessandro.trombetta@unical.it}

\thanks{F. Esposito and B. Sciunzi were partially supported by PRIN project  2017JPCAPN (Italy): {\em Qualitative and quantitative aspects of nonlinear PDEs} and also by Gruppo Nazionale per l'Analisi Matematica, la Probabilit\`a e le loro Applicazioni (GNAMPA) of the Istituto Nazionale di Alta Matematica (INdAM)}
\thanks{\textit{ Mathematics Subject Classification}: 35B06, 35B50, 35B51}

\begin{abstract}
In this paper we prove regularity results for a class of nonlinear degenerate elliptic equations of the form $\displaystyle -\operatorname{div}(A(|\nabla u|)\nabla u)+B\left( |\nabla u|\right) =f(u);$ in particular, we investigate the second order regularity of the solutions. As a consequence of these results, we obtain symmetry and monotonicity properties of positive solutions for this class of degenerate problems in convex symmetric domains via a suitable adaption of the celebrated moving plane method of Alexandrov-Serrin.
\end{abstract}

\maketitle

\section{Introduction}
In the first part of the paper we deal with regularity of weak solutions of the following nonlinear degenerate elliptic equation in divergence form
\begin{equation}  \label{equ1}
-\operatorname{div}(A(|\nabla u|)\nabla u)+B\left( |\nabla u|\right) =f(u) \qquad \text{in } \Omega,
\end{equation}
where $\Omega$ is any open set of $\mathbb{R}^{N}$, $N\geq 2$. 

The real function $A:\R^+ \rightarrow \R^+$ is of class $\mathcal{C}^1(\R^+)$ and it satisfies the following  assumptions:
\begin{eqnarray}
\label{hpA1}
&&\displaystyle \limsup_{t \rightarrow 0^+} tA(t)< +\infty;\\ \label{hpA2}
&&-1<\underset{t>0}{\inf }\,\, \displaystyle \frac{tA'(t)}{A(t)}=:m_{A} \leq  M_{A}:=\underset{t>0}{\sup }\,\,\frac{tA'(t)}{A(t)}<+\infty;\\ \label{hpA3}
&&\exists \; \tilde{\vartheta} \geq 0 \; \text{such that } A(t)\geq K \cdot t^{\tilde{\vartheta}} \; \text{for some } K>0, \; \forall t\in\R^+.
\end{eqnarray}
Assumption \eqref{hpA2} ensures that the differential operator in \eqref{equ1} satisfies ellipticity and monotonicity conditions, not necessarily of power type \cite{CM1, CM2,CM3}.

 Moreover we assume that:
\begin{equation} \label{hpA4}
\left(t \mapsto A(t) \text{ is increasing}\right) \quad \text{or} \quad
\begin{pmatrix}
A(t) \geq \tau(\mathcal{K}) > 0 \text{ on any compact set } \mathcal{K} \subset [0,+\infty)\\
\text{and } t \mapsto tA(t) \text{ is increasing}
\end{pmatrix}.
\end{equation}

We suppose that the real valued function $B: \overline{\R^+} \longrightarrow \overline{\R^+}$ is of class $\mathcal{C}^1(\overline{\R^+})$ and it satisfies
\begin{eqnarray}\label{hpB1}
&& B(0)=0; \\ \label{hpB3}
&& B'(t)\leq \hat{C} \cdot t A(t) \ \text{for some } \hat C >0, \; \forall t \in \R^+.
\end{eqnarray}

We remark that the assumptions on the nonlinearity $f :\R \rightarrow \R$ will be always specified. However the reader could think to $f$ as a locally Lipschitz continuous function.

Equation \eqref{equ1} has to be understood in the weak meaning, in particular we shall consider solutions of class $\mathcal{C}^{1,\alpha}$. This assumption is natural and, according to \cite{Di, Li, Te, T}, we give the following:

\begin{definition}\label{weaksol}
We say that $u\in C^{1,\alpha}(\Omega)$ is a weak solution to \eqref{equ1} if
\begin{equation}  \label{wequ1}
\int_\Omega A(|\nabla u|)\left(\nabla u,\nabla \psi \right) \, dx +\int_\Omega B\left( |\nabla u|\right) \psi \,dx=\int_\Omega f(u) \psi
\,dx
\end{equation}
for every $\psi \in C_{c}^{\infty }(\Omega )$.
\end{definition}

In the sequel we will frequently exploit the fact that the equation \eqref{equ1} is no
longer degenerate outside the critical set $\mathcal{Z}_u:=\{\nabla u=0\}$
of a solution $u$. Consequently it is also natural to assume that $u$ is of
class $\mathcal{C}^2$ outside $\mathcal{Z}_u$. We will use the notation $u_{i}:=u_{x_{i}}$, $i=1,\ldots, N$, to indicate the partial derivative of $u$ with respect to $x_i$. These are the classic derivatives since $u$ is of class $\mathcal{C}^1$. The
second derivatives of $u$ will be indicated with $u_{ij}$, $i,j=1,\ldots, N$. In this case, since $u$ is of class $\mathcal{C}^2$ only far from the singular set $\mathcal{Z}_u$, we agree that $u_{ij}$ coincides with the second derivatives of $u$ far from the singular set $\mathcal{Z}_u$, while $u_{ij}=0$ on the singular set $\mathcal{Z}_u$. At the beginning this is only a notation inspired by the Stampacchia's theorem, but with this definition $u_{ij}$ will represent actually the second distributional derivatives of $u$. We just remark that when $A(t)=t^{p-2}$ the operator reduces to the standard $p$-Laplacian and, in this case (when also $B \equiv 0$), by \cite{DS1, Sci1,Sci2}, it follows that $u \in W^{2,2}_{loc}(\Omega)$ if $1<p<3$, and that if $p\geq 3$ and the source term $f$ is strictly positive then $u \in W^{2,q}_{loc}(\Omega)$ for $q < \frac{p-1}{p-2}$. A generalization of local regularity results can be found in \cite{CDS,Riey}. All these arguments can be seen as an issue in the context of the Calder\'on-Zygmund theory for nonlinear degenerate problems (see \cite{Mingione1, Mingione2}).

However, the reader could think to the following model equation in the applications:
\begin{equation}\label{modelEquation}
	-\Delta_p u + \alpha |\nabla u|^p = f(u) \quad \text{in } \Omega,
\end{equation}
where $\alpha \geq 0$. In particular, in \eqref{modelEquation} we have chosen $A(t)=t^{p-2}$ and $B(t)=\alpha \cdot t^p$.
Now we state our local regularity results:
\begin{thm} \label{theoreg1} 
Let $u\in C^{1,\alpha }(\Omega )\cap C^{2}(\Omega \setminus
\mathcal{Z}_{u})$ be a weak solution to \eqref{equ1} with $f\in W^{1,\infty }(\Omega )$. Let $x_0 \in \Omega $ such that $B_{2\rho }(x_{0})\subset \Omega $ and assume that the function $A(\cdot)$ satisfies \eqref{hpA1}, \eqref{hpA2} and $B(\cdot)$ satisfies \eqref{hpB3}. Then
\begin{equation}  \label{reg1}
\int_{B_{\rho }(x_{0})}\frac{A(|\nabla u|)|\nabla u_{i}|^{2}}{|x-y|^{\gamma
}|u_{i}|^{\beta }}\,dx\leq \mathcal{C}\text{ \ \ \ }\forall i=1,...,N,
\end{equation}
uniformly for any $y \in B_\rho(x_0)$, with 
$$\mathcal{C}=\mathcal{C}(\gamma, m_A,M_A, \beta ,f,\|\nabla u\|_{\infty },\rho ,x_{0}),$$
for any $0\leq \beta <1$, $\gamma <N-2$ if $N\geq 3$, or $\gamma =0$ if $N=2$.
\end{thm}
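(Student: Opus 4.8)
The plan is to differentiate equation \eqref{equ1} in the direction $x_i$ on the open set $\Omega^+:=\Omega\setminus\mathcal{Z}_u$, where $u$ is of class $C^2$ and the operator is uniformly elliptic, and to test the resulting linearized equation against a suitably weighted multiple of $u_i$.

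\emph{Step 1 (linearized equation).} Set $a_{kj}(\nabla u):=A(|\nabla u|)\delta_{kj}+A'(|\nabla u|)\frac{u_ku_j}{|\nabla u|}$. Differentiating \eqref{equ1} gives, for every $\psi\in C^1_c(\Omega^+)$,
$$\int_{\Omega^+}\sum_{k,j}a_{kj}(\nabla u)\,u_{ki}\,\psi_j\,dx+\int_{\Omega^+}B'(|\nabla u|)\,\frac{(\nabla u,\nabla u_i)}{|\nabla u|}\,\psi\,dx=\int_{\Omega^+}f'(u)\,u_i\,\psi\,dx.$$
By \eqref{hpA2}, since $(\nabla u,\xi)^2\le|\nabla u|^2|\xi|^2$, one has $\lambda_A A(|\nabla u|)|\xi|^2\le\sum_{k,j}a_{kj}(\nabla u)\xi_k\xi_j\le\Lambda_A A(|\nabla u|)|\xi|^2$ for all $\xi\in\R^N$, with $\lambda_A:=\min\{1,1+m_A\}$ and $\Lambda_A:=\max\{1,1+M_A\}$; here $\lambda_A>0$ is exactly the point where $m_A>-1$ is used.

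\emph{Step 2 (test function and good term).} Fix $\varepsilon>0$ and $\varphi\in C^\infty_c(B_{2\rho}(x_0))$ with $\varphi\equiv1$ on $B_\rho(x_0)$, $0\le\varphi\le1$, $|\nabla\varphi|\le C/\rho$, and for $y\in B_\rho(x_0)$ take $\psi_\varepsilon:=u_i\varphi^2\big((u_i^2+\varepsilon^2)^{\beta/2}|x-y|^\gamma\big)^{-1}$. Differentiating $u_i$ in the numerator produces the leading contribution $\big(1-\beta\frac{u_i^2}{u_i^2+\varepsilon^2}\big)\frac{\varphi^2}{(u_i^2+\varepsilon^2)^{\beta/2}|x-y|^\gamma}\sum_{k,j}a_{kj}u_{ki}u_{ij}$; using $u_{ki}=(\nabla u_i)_k$, $u_{ij}=(\nabla u_i)_j$ (symmetry of $D^2u$ on $\Omega^+$), the ellipticity of Step 1, and $\beta<1$, this is bounded below by $(1-\beta)\lambda_A\frac{A(|\nabla u|)|\nabla u_i|^2\varphi^2}{(u_i^2+\varepsilon^2)^{\beta/2}|x-y|^\gamma}$. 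Denote by $I_\varepsilon$ the integral of this quantity over $B_{2\rho}(x_0)$.

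\emph{Step 3 (remainders and absorption).} All the remaining terms are moved to the right and estimated by Young's inequality so as to reabsorb a small multiple of $I_\varepsilon$: the $\nabla\varphi$-term is controlled by $\eta I_\varepsilon+C_\eta\int_{B_{2\rho}}A(|\nabla u|)|\nabla u|^{2-\beta}|\nabla\varphi|^2|x-y|^{-\gamma}\,dx$; the $B$-term, via \eqref{hpB3} and $|(\nabla u,\nabla u_i)|\le|\nabla u||\nabla u_i|$, by $\eta I_\varepsilon+C_\eta\int_{B_{2\rho}}A(|\nabla u|)|\nabla u|^{4-\beta}\varphi^2|x-y|^{-\gamma}\,dx$; the right-hand side by $\|f'(u)\|_{L^\infty(B_{2\rho})}\int_{B_{2\rho}}|\nabla u|^{2-\beta}\varphi^2|x-y|^{-\gamma}\,dx$; and the term coming from $\nabla(|x-y|^{-\gamma})$ by $\eta I_\varepsilon+C_\eta\int_{B_{2\rho}}A(|\nabla u|)|\nabla u|^{2-\beta}\varphi^2|x-y|^{-\gamma-2}\,dx$. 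Throughout one uses $u_i^2(u_i^2+\varepsilon^2)^{-\beta/2}\le|\nabla u|^{2-\beta}$ and the fact that $t\mapsto A(t)t^{2-\beta}$ and $t\mapsto A(t)t^{4-\beta}$ are bounded on $(0,\|\nabla u\|_\infty]$, which follows from \eqref{hpA1}, the continuity of $A$ and $\beta<1$. The first three integrals are finite and bounded uniformly in $y\in B_\rho(x_0)$ because $\int_{B_{2\rho}(x_0)}|x-y|^{-\gamma}\,dx<\infty$ for $\gamma<N$, while the last one is bounded uniformly in $y$ precisely because $\gamma+2<N$, i.e. $\gamma<N-2$ (when $N=2$ this term does not appear, consistently with $\gamma=0$). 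Choosing $\eta$ small enough gives $I_\varepsilon\le\mathcal C$ with $\mathcal C$ independent of $\varepsilon$ and $y$; in particular $\int_{B_\rho(x_0)}A(|\nabla u|)|\nabla u_i|^2(u_i^2+\varepsilon^2)^{-\beta/2}|x-y|^{-\gamma}\,dx\le\mathcal C$, and \eqref{reg1} follows by letting $\varepsilon\to0^+$ and invoking monotone convergence, since the integrand increases to $A(|\nabla u|)|\nabla u_i|^2|u_i|^{-\beta}|x-y|^{-\gamma}$.

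\emph{Main obstacle.} The computation above is only formal near $\mathcal{Z}_u$: the linearized equation of Step 1 is classical merely on $\Omega^+$, the test function $\psi_\varepsilon$ does not vanish near $\mathcal{Z}_u$, and a priori one does not even know that $I_\varepsilon<+\infty$. The technical heart is to make this rigorous, for instance by inserting a further cut-off $\chi_\delta$ equal to $1$ on $\{|\nabla u|\ge\delta\}$ and supported in $\{|\nabla u|>\delta/2\}$, running Steps 2--3 with the admissible test function $\psi_\varepsilon\chi_\delta$ (for which the corresponding integral is manifestly finite), controlling the extra terms carrying $\nabla\chi_\delta$ — which are supported in the thin region $\{\delta/2\le|\nabla u|\le\delta\}$ and are handled using \eqref{hpA1} together with the smallness of $|\nabla u|$ and $|u_i|$ there — and finally letting $\delta\to0^+$ by monotone convergence before letting $\varepsilon\to0^+$. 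Alternatively, one approximates $A$ by a non-degenerate operator, proves the estimate for the corresponding smooth solutions with constants independent of the regularization parameter, and passes to the limit via Fatou's lemma.
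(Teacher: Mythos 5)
Your Steps 1--3 reproduce the formal core of the paper's argument: linearize in the direction $x_i$, test against a weight $\sim u_i |u_i|^{-\beta}|x-y|^{-\gamma}\varphi^2$, extract the good term $\int A(|\nabla u|)|\nabla u_i|^2|u_i|^{-\beta}|x-y|^{-\gamma}\varphi^2$ from the ellipticity \eqref{hpA2}, and absorb all the remainders by weighted Young plus the boundedness of $tA(t)$ and the condition $\gamma+2<N$. That part is fine and matches the paper.

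The gap is in the ``Main obstacle'' paragraph, and it is a genuine one. You propose to make the computation rigorous by inserting a cut-off $\chi_\delta$ equal to $1$ on $\{|\nabla u|\ge\delta\}$ and supported in $\{|\nabla u|>\delta/2\}$, and you claim the extra terms carrying $\nabla\chi_\delta$ are ``handled using \eqref{hpA1} together with the smallness of $|\nabla u|$ and $|u_i|$'' on the transition region. This does not work. Any such cut-off is necessarily of the form $\chi_\delta=\eta_\delta(|\nabla u|)$ with $|\eta_\delta'|\lesssim\delta^{-1}$, so $\nabla\chi_\delta=\eta_\delta'(|\nabla u|)\nabla(|\nabla u|)$ carries a factor $\|D^2u\|$ and a factor $\delta^{-1}$. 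Feeding this into the boundary term produces, after the same Young manipulations, an expression of order
$$\delta^{-\beta}\int_{\{\delta/2\le|\nabla u|\le\delta\}}\frac{A(|\nabla u|)\,\|D^2u\|^2}{|x-y|^{\gamma}}\varphi^2\,dx,$$
and there is no way to control this: the factor $\delta^{-\beta}$ blows up as $\delta\to0^+$, while the integral is exactly the quantity whose finiteness you are trying to establish, so invoking its smallness near $\mathcal Z_u$ is circular. The ``smallness of $|\nabla u|$'' only gains $\delta^{1-\beta}$ from $|u_i|^{1-\beta}$, which is eaten by the $\delta^{-1}$ from $\eta_\delta'$. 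The alternative route you sketch (regularize $A$, prove uniform estimates, pass to the limit) is a different program that you do not carry out and which would require constructing an approximating problem with matching solutions; it cannot be taken as a one-line fix.

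The paper avoids this obstacle with a sharper choice of truncation: it takes $T_\varepsilon(t)=G_\varepsilon(t)/|t|^\beta$ with $G_\varepsilon$ vanishing on $[-\varepsilon,\varepsilon]$, so the cut-off is in the scalar variable $u_i$ rather than in $|\nabla u|$. This has two crucial consequences that your $\chi_\delta$ lacks. First, since $u_i=0$ on $\mathcal Z_u$, the test function $T_\varepsilon(u_i)H_\delta(|x-y|)\varphi_\rho^2$ automatically has compact support in $\Omega\setminus\mathcal Z_u$, so it is admissible in the linearized equation with no further argument. Second, and decisively, differentiating $T_\varepsilon(u_i)$ produces $T_\varepsilon'(u_i)\nabla u_i$, and the resulting quadratic form $\sum_{k,j}a_{kj}(\nabla u)\,u_{ki}u_{ij}\,T_\varepsilon'(u_i)$ is \emph{nonnegative} wherever $T_\varepsilon'\ge0$; the piecewise-affine ramp in $G_\varepsilon$ guarantees $T_\varepsilon'(t)\ge(1-\beta)|t|^{-\beta}$ on $\{|t|\ge\varepsilon\}$. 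In other words, the paper's truncation does not create a boundary term to be estimated; it reinforces the good term. In your scheme the cut-off is in $|\nabla u|$, so $\nabla\chi_\delta$ is not aligned with $\nabla u_i$, the sign information is lost, and the error genuinely involves $\|D^2u\|$. If you replace your $\chi_\delta(|\nabla u|)$ by a cut-off $\chi_\delta(u_i)$ (equivalently, fold it into a single truncation $T_\varepsilon$ as in the paper), the argument closes.
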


We remark that Theorem \ref{theoreg1} holds without any sign assumption on the source term $f$. If we assume that $f$ has a sign, as a consequence of the previous result and we  obtain the summability properties of the inverse of the weight $A(|\nabla u|)$.

\begin{cor}
	\label{theoreg2} Let $u\in C^{1,\alpha }(\Omega )\cap C^{2}(\Omega \setminus
	\mathcal{Z}_{u})$ be a weak solution to \eqref{equ1} with $f\in W^{1,\infty }(\Omega )$
	and $f(u(x))=f(x)\geq c(\rho ,x_{0})>0$, in $B_{2\rho }(x_{0})\subset \Omega $ for
	some $\rho =\rho (x_{0})>0$. Let us assume that $A(\cdot)$ satisfies \eqref{hpA1}, \eqref{hpA2}, \eqref{hpA3} and that $B(\cdot)$ satisfies \eqref{hpB3}. Then
	\begin{equation}  \label{reg2}
	\int_{B_{\rho }(x_{0})}\frac{1}{(A(|\nabla u|))^{\sigma}}\frac{1}{|x-y|^\gamma}dx \leq \mathcal{C},
	\end{equation}
	with $1< \sigma <1+\frac{1}{\tilde\vartheta}$, $\gamma <N-2$ if $N\geq 3$, $%
	\gamma =0$ if $N=2$ and 
	\begin{equation*}
	\mathcal{C}=\mathcal{C}(\gamma ,m_{A},M_{A},f,\|\nabla u\|_{\infty },\rho ,x_{0},\sigma).
	\end{equation*}
	The same result holds true if we assume that $f(u(x))=f(x)\leq c(\rho, x_0)<0$ in $B_{2\rho}(x_0) \subset \Omega$.	Moreover $\left\vert \{A\left( \left\vert \nabla u\right\vert \right)=0\}\right\vert =0$ \emph{(\emph{in particular} $|\mathcal{Z}_u|=0$)}.
\end{cor}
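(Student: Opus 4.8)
The plan is to derive \eqref{reg2} from the weighted estimate \eqref{reg1} of Theorem \ref{theoreg1} by a careful choice of exponents, using the lower bound \eqref{hpA3} to convert the gradient term in \eqref{reg1} into a power of $A(|\nabla u|)$, and then to deduce $|\{A(|\nabla u|)=0\}|=0$ as a byproduct. The starting observation is that since $f(x)\geq c>0$ on $B_{2\rho}(x_0)$, testing the equation \eqref{wequ1} localized near a point of $\mathcal{Z}_u$ forces $\nabla u$ to be nonvanishing in a strong integrated sense; more precisely, one exploits that on $\mathcal{Z}_u$ the left-hand side of \eqref{equ1} (understood distributionally, recalling $u_{ij}=0$ on $\mathcal{Z}_u$ and $B(0)=0$ by \eqref{hpB1}) would vanish, contradicting $f\geq c>0$ unless $|\mathcal{Z}_u|=0$. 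This already gives $|\mathcal{Z}_u|=0$; to upgrade to $|\{A(|\nabla u|)=0\}|=0$ one uses \eqref{hpA3}, which gives $\{A(|\nabla u|)=0\}\subset\{|\nabla u|=0\}=\mathcal{Z}_u$ when $\tilde\vartheta>0$, and when $\tilde\vartheta=0$ the bound $A\geq K>0$ makes the set empty.

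For the quantitative bound, the key algebraic step is a Hölder inequality against the weight appearing in \eqref{reg1}. Writing $\frac{1}{A(|\nabla u|)^\sigma|x-y|^\gamma}$, I would split it as a product of $\bigl(\frac{A(|\nabla u|)|\nabla u_i|^2}{|x-y|^{\gamma_1}|u_i|^\beta}\bigr)^{a}$ — controlled by Theorem \ref{theoreg1} — times a remainder factor, and then absorb the remainder using $|\nabla u|\leq\|\nabla u\|_\infty$, the lower bound $A(|\nabla u|)\geq K|\nabla u|^{\tilde\vartheta}$ from \eqref{hpA3} to bound negative powers of $|\nabla u|$ by negative powers of $A(|\nabla u|)$, and integrability of $|x-y|^{-\gamma_2}$ for $\gamma_2<N$. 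Summing over $i=1,\dots,N$ (and using $|\nabla u|^2\leq N\max_i|u_i|^2$, so $|\nabla u_i|$ cannot all be small where $|\nabla u|$ is bounded below) converts the per-coordinate estimate into one for $A(|\nabla u|)$ itself. The constraint $1<\sigma<1+\frac1{\tilde\vartheta}$ is exactly what makes the bookkeeping close: it guarantees that after using $A(|\nabla u|)\geq K|\nabla u|^{\tilde\vartheta}$ the leftover power of $|\nabla u|$ stays nonnegative (hence bounded by $\|\nabla u\|_\infty$), while $\sigma>1$ is needed so that the Hölder conjugate exponents are admissible and $\beta<1$, $\gamma<N-2$ can be chosen compatibly.

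The main obstacle I anticipate is the simultaneous juggling of the three parameters $\sigma$, $\beta$, $\gamma$: one must verify that for every admissible $\sigma$ there is a choice of $\beta\in[0,1)$ and of an auxiliary $\gamma_1<N-2$ (feeding into Theorem \ref{theoreg1}) together with $\gamma_2<N$, such that the Hölder exponents are conjugate, the residual power of $|\nabla u|$ (after invoking \eqref{hpA3}) is $\geq 0$, and the residual power of $|x-y|$ matches the prescribed $\gamma<N-2$. A secondary technical point is handling the region where some $u_i$ is small but $|\nabla u|$ is not: there the integrand $|u_i|^{-\beta}$ in \eqref{reg1} is large, but one only needs the coordinate $i$ realizing $\max_i|u_i|^2\gtrsim|\nabla u|^2$, so a pointwise selection of the dominant coordinate — or equivalently summing the $N$ estimates and noting the sum of the $|\nabla u_i|^2$ controls $|\nabla|\nabla u|^2|$-type quantities away from $\mathcal{Z}_u$ — resolves it. The sign-reversed case $f\leq c<0$ is identical after replacing $u$ by $-u$, which leaves $A(|\nabla u|)$ unchanged and only flips the sign of the source.
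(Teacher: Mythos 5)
The central quantitative step of your plan does not work. You propose to write $\frac{1}{A(|\nabla u|)^\sigma |x-y|^\gamma}$ as a power of the integrand $\frac{A(|\nabla u|)|\nabla u_i|^2}{|x-y|^{\gamma_1}|u_i|^\beta}$ from \eqref{reg1} times a remainder, and then apply H\"older. But the controlled factor introduces $|\nabla u_i|^{2a}$ in the numerator, and dividing this back out forces $|\nabla u_i|^{-2a}$ into the remainder. These $\nabla u_i$ are the \emph{second} derivatives of $u$, and there is simply no pointwise lower bound on them; they vanish on large sets (e.g.\ $u(x)=x_1$ has $|\nabla u|=1$ everywhere but all $u_{ij}=0$). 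Your remark that ``$|\nabla u_i|$ cannot all be small where $|\nabla u|$ is bounded below'' conflates $\nabla u_i$ with $u_i$ and is false. A second, equally fatal, issue is that you only use the hypothesis $f\geq c>0$ for the qualitative statement $|\mathcal Z_u|=0$, while your quantitative argument relies solely on Theorem~\ref{theoreg1}. Theorem~\ref{theoreg1} requires no sign on $f$, yet without a sign the conclusion \eqref{reg2} is simply false (take $u$ constant, $f\equiv 0$, $\tilde\vartheta>0$, so $A(|\nabla u|)\equiv A(0)=0$). Hence the sign of $f$ must enter the \emph{quantitative} estimate itself, not just as a side remark.

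The paper (following \cite{CDS}, to which the proof is delegated, and in the spirit of \cite{DamSciunzi,DS1}) instead \emph{tests the equation} with an approximation of $\psi\sim\varphi^2\big/\big(A(|\nabla u|)^{\sigma}\,|x-y|^\gamma\big)$. The positivity of $f$ gives a lower bound $c\int\psi\leq\int f(u)\psi=\int A(|\nabla u|)(\nabla u,\nabla\psi)+\int B(|\nabla u|)\psi$, and the right-hand side, which involves $\nabla\big(A(|\nabla u|)\big)\lesssim\|D^2u\|$, is estimated via a weighted Young inequality against precisely the quantity bounded in Theorem~\ref{theoreg1}; the constraint $1<\sigma<1+\tfrac1{\tilde\vartheta}$, together with \eqref{hpA3}, makes the residual powers of $|\nabla u|$ nonnegative so that $\|\nabla u\|_\infty$ controls them. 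The statement $|\{A(|\nabla u|)=0\}|=0$ then follows \emph{from} \eqref{reg2} (if the set had positive measure the integral would be infinite), rather than being proved independently as you suggest; your independent argument for it is not rigorous as stated since $\mathcal Z_u$ need not contain an open set. Finally, your reduction of the case $f\leq -c<0$ to the case $f\geq c>0$ via $u\mapsto -u$ flips the sign of the lower-order term $B$, taking you out of the assumed class $B\geq 0$; the correct handling is to re-run the same test-function argument with reversed signs.
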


The lack of regularity of the solutions of \eqref{equ1} is one of the greatest difficulty in the applications. In general, if we consider solutions of \eqref{equ1} in a general bounded smooth domain then the critical set $\mathcal{Z}_u$ may be very irregular and estimates of this kind are not available. Theorem \ref{theoreg1}  is actually an estimate on the way the operator degenerate near the critical set. We start observing that the estimates in Theorem \ref{theoreg1} and in Corollary \ref{theoreg2}, holds in a general compact set of $\Omega$. The same regularity holds all over the domain once we assume that there are no critical points of the solutions up to the boundary, namely $\mathcal{Z}_u\cap\partial\Omega=\emptyset$. This is an abstract assumption always verified each time we may exploit the Hopf's boundary lemma, see \cite{PSB}. The global regularity results  follow via a covering argument that can be found in \cite{CDS}.\\

However we will show that we can efficiently work in the weighted Sobolev space $W^{1,2}_\rho(\Omega)$ using only the estimates proved in Theorem \ref{theoreg1}. In particular, we will prove that if $f(s)>0$ for $s>0$ and $u$ is a solution of \eqref{equ1}, considering the weight $\rho=A(|\nabla u|)$, then it holds the following weighted
Poincar\'e's type inequality:
\begin{thm}\label{WPI}
	Let $\Omega' \subseteq \Omega $ and $u\in W_{0,\rho}^{1,2}\left( \Omega ^{\prime }\right) $. Then%
	\begin{equation}  \label{wpi}
	\int_{\Omega ^{\prime }}u^{2}dx\leq C_{P}\left( \Omega ^{\prime }\right)
	\int_{\Omega ^{\prime }}A(|\nabla u|)\left\vert \nabla u\right\vert
	^{2}dx,
	\end{equation}%
	where $C_{P}\left(\Omega ^{\prime }\right) \rightarrow 0$ when $\left\vert \Omega^{\prime}\right\vert \rightarrow 0$.
\end{thm}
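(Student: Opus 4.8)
The plan is to prove \eqref{wpi} first for $\varphi\in C_{c}^{\infty}(\Omega')$ and then to recover the general case $u\in W_{0,\rho}^{1,2}(\Omega')$ by density, since $W_{0,\rho}^{1,2}(\Omega')$ is the closure of $C_{c}^{\infty}(\Omega')$ in the norm $v\mapsto\big(\int_{\Omega'}v^{2}\,dx+\int_{\Omega'}\rho\,|\nabla v|^{2}\,dx\big)^{1/2}$, and both sides of \eqref{wpi} are continuous along sequences converging in this norm. Throughout, $\rho:=A(|\nabla u|)$ denotes the fixed weight induced by the solution, and $\varphi\in C_{c}^{\infty}(\Omega')$ is arbitrary. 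If $\tilde\vartheta=0$ then, by \eqref{hpA3}, $\rho$ is bounded away from $0$ on $\Omega'$ and \eqref{wpi} reduces to the classical Poincar\'e inequality, so we may assume $\tilde\vartheta>0$.

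The heart of the matter is to trade the degenerate quantity $\int_{\Omega'}\rho\,|\nabla\varphi|^{2}\,dx$ for an \emph{unweighted} $L^{q}$ bound on $\nabla\varphi$ with a suitable $q<2$. By Corollary~\ref{theoreg2} — applied on a finite covering of $\overline{\Omega'}$ by balls on which its conclusion holds, which is legitimate because under the running assumptions $\mathcal Z_{u}\cap\partial\Omega=\emptyset$, so that $|\nabla u|$, hence $\rho$, stays bounded away from $0$ near $\partial\Omega$ — we have $\rho^{-\sigma}\in L^{1}(\Omega')$ for every $1<\sigma<1+\tfrac{1}{\tilde\vartheta}$. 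Fix such a $\sigma$ and set $q:=\tfrac{2\sigma}{\sigma+1}\in(1,2)$. Writing $|\nabla\varphi|^{q}=\big(\rho\,|\nabla\varphi|^{2}\big)^{q/2}\rho^{-q/2}$ and applying H\"older's inequality with exponents $\tfrac{2}{q}$ and $\tfrac{2}{2-q}$ (so that $\tfrac{q}{2-q}=\sigma$) gives
\begin{equation*}
\int_{\Omega'}|\nabla\varphi|^{q}\,dx\le\left(\int_{\Omega'}\rho\,|\nabla\varphi|^{2}\,dx\right)^{q/2}\left(\int_{\Omega'}\rho^{-\sigma}\,dx\right)^{\frac{1}{\sigma+1}}.
\end{equation*}

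It remains to bound $\|\varphi\|_{L^{2}(\Omega')}$ by $\|\nabla\varphi\|_{L^{q}(\Omega')}$ with a constant that vanishes as $|\Omega'|\to0$. Since $q<2\le N$, the classical Sobolev--Poincar\'e inequality yields $\|\varphi\|_{L^{q^{*}}(\Omega')}\le S(N,q)\,\|\nabla\varphi\|_{L^{q}(\Omega')}$ with $q^{*}=\tfrac{Nq}{N-q}$ and $S(N,q)$ independent of $\Omega'$; provided $q^{*}\ge2$, H\"older's inequality on $\Omega'$ then gives $\|\varphi\|_{L^{2}(\Omega')}\le|\Omega'|^{\frac12-\frac1{q^{*}}}S(N,q)\,\|\nabla\varphi\|_{L^{q}(\Omega')}$. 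Combining this with the previous display and raising to the power $2$ proves \eqref{wpi} with
\begin{equation*}
C_{P}(\Omega')=S(N,q)^{2}\,|\Omega'|^{\,1-\frac{2}{q^{*}}}\left(\int_{\Omega'}\rho^{-\sigma}\,dx\right)^{\frac{2}{q(\sigma+1)}},
\end{equation*}
and $C_{P}(\Omega')\to0$ as $|\Omega'|\to0$ because $1-\tfrac{2}{q^{*}}\ge0$ and, by absolute continuity of the Lebesgue integral, $\int_{\Omega'}\rho^{-\sigma}\,dx\to0$.

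The step I expect to require the most care is precisely the passage $\|\varphi\|_{L^{2}}\lesssim\|\nabla\varphi\|_{L^{q}}$, that is, ensuring $q^{*}\ge2$, equivalently $q\ge\tfrac{2N}{N+2}$, equivalently $\sigma\ge\tfrac{N}{2}$. For $N=2$ this is automatic from $\sigma>1$, and for $N\ge3$ it is compatible with the admissible range $\sigma<1+\tfrac1{\tilde\vartheta}$ of Corollary~\ref{theoreg2} as long as $\tilde\vartheta$ is not too large; in the remaining, more degenerate, regime one should reach the exponent $2$ either by iterating the Sobolev embedding or, equivalently, by replacing the last two steps with a weighted Sobolev inequality of exponent $1+\tfrac1\sigma$ for the weight $\rho$, again absorbing the loss through the summability of $\rho^{-\sigma}$. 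Apart from this exponent bookkeeping, the only other points deserving a word of care are the density reduction and the covering argument making Corollary~\ref{theoreg2} applicable on all of $\Omega'$.
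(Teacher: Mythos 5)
Your chain (pointwise H\"older split on $|\nabla\varphi|^q$, unweighted Sobolev--Poincar\'e, then H\"older on the domain) is an elementary reconstruction of the mechanism behind Theorem~\ref{thm: Sobolev}, but it loses precisely where that theorem's Riesz--potential structure is needed. The paper's proof is three lines: pick $2<q<2^*(\sigma)$, use H\"older to get $\|w\|_{L^2}^2\leq |\Omega'|^{(q-2)/q}\|w\|_{L^q}^2$, and invoke Theorem~\ref{thm: Sobolev} to bound $\|w\|_{L^q}$ by $C_{s_\rho}\|\nabla w\|_{L^2(\rho)}$. The key point is that the exponent $2^*(\sigma)$ in \eqref{eq:2*} exceeds $2$ for \emph{every} admissible $\sigma>1$, because the Riesz exponent $\gamma$ may be taken anywhere in the nonempty interval $(N-2\sigma,\,N-2)$, and the hypothesis $\int_{\Omega'}\rho^{-\sigma}|x-y|^{-\gamma}\,dx\leq\mathcal C$ is exactly what Corollary~\ref{theoreg2} supplies. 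Your derivation, by contrast, only uses the $\gamma=0$ case ($\rho^{-\sigma}\in L^1$), and consequently lands on the strictly smaller exponent $q^*=Nq/(N-q)$ with $q=2\sigma/(\sigma+1)$; as you yourself compute, $q^*\geq 2$ forces $\sigma\geq N/2$, hence $\tilde\vartheta\leq 2/(N-2)$ when $N\geq3$. That is a genuine restriction which the theorem does not impose.

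You are candid about the gap, but the two repairs you sketch do not close it. Iterating the unweighted Sobolev embedding gets you nowhere, since you control $\nabla\varphi$ only in $L^q$ and a second application would require either control of higher derivatives or a Moser-type iteration on powers of $\varphi$, neither of which is at hand. The other suggestion --- a weighted Sobolev inequality ``absorbing the loss through the summability of $\rho^{-\sigma}$'' --- is not really an alternative: worked out honestly it \emph{is} Theorem~\ref{thm: Sobolev}, whose proof in \cite{DamSciunzi} passes through the representation $|w(y)|\lesssim\int|\nabla w(x)||x-y|^{1-N}\,dx$ and distributes the singular kernel between the two H\"older factors, which is precisely where the extra $|x-y|^{-\gamma}$ in Corollary~\ref{theoreg2} is consumed. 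So in the general degenerate regime your argument, as written, is not a proof; you would need to either cite Theorem~\ref{thm: Sobolev} directly (as the paper does) or rederive it from the potential estimate, which is substantially more than ``exponent bookkeeping.'' The density reduction and the covering step are fine (granting the standing hypotheses on $u$ and $f$ that the paper states immediately before Theorem~\ref{WPI}), and your treatment is correct when $\sigma\geq N/2$.
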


Thanks to the weighted Poincar\'e's type inequality obtained in Theorem \ref{WPI}, we obtain the following weak comparison principle in small domains:

\begin{thm}\label{WCP}
	Let $u\in C^{1,\alpha }(\Omega )\cap C^{2}(\Omega \setminus
	\mathcal{Z}_{u})$ and $v\in C^{1,\alpha }(\Omega)\cap C^{2}(\Omega \setminus \mathcal{Z}_{v})$
	be weak solutions to \eqref{equ1}. Let us assume that $A(\cdot)$ satisfies \eqref{hpA1}, \eqref{hpA2}, \eqref{hpA3}, \eqref{hpA4}, that $B(\cdot)$ satisfies \eqref{hpB1}, \eqref{hpB3} and that $f$ is a locally Lipschitz continuous function with $f(s)>0$ for $s>0$. Let $\Omega'\subseteq \Omega $
	be open and suppose $u\leq v$ on $\partial \Omega'$, then there
	exists $\delta >0$ such that, if $\left\vert \Omega'\right\vert
	\leq \delta $, then $u\leq v$ in $\Omega'$.
\end{thm}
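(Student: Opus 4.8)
The strategy is the classical ``weak comparison via small Dirichlet energy'' argument. Put $w:=(u-v)^{+}$ and let $L$ be an upper bound for $|\nabla u|+|\nabla v|$ on $\Omega'$, which is finite since $u,v\in C^{1,\alpha}$. Under \eqref{hpA2} the function $t\mapsto tA(t)$ is increasing, because $(tA(t))'=A(t)\bigl(1+\tfrac{tA'(t)}{A(t)}\bigr)\ge(1+m_{A})A(t)>0$; together with \eqref{hpA1} this shows $tA(t)$ is bounded on $(0,L]$, hence the flux fields $A(|\nabla u|)\nabla u$ and $A(|\nabla v|)\nabla v$ lie in $L^{\infty}(\Omega')$, and $w\in W^{1,2}_{0}(\Omega')$ is an admissible test function in Definition \ref{weaksol} (approximate by $C^{\infty}_{c}$ functions, using $u\le v$ on $\partial\Omega'$). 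Subtracting the weak formulations of \eqref{equ1} for $u$ and $v$ and choosing $\psi=w$ gives
\begin{equation}\label{wcp-test}
\int_{\Omega'}\bigl\langle A(|\nabla u|)\nabla u-A(|\nabla v|)\nabla v,\nabla w\bigr\rangle\,dx=\int_{\Omega'}\bigl(f(u)-f(v)\bigr)w\,dx-\int_{\Omega'}\bigl(B(|\nabla u|)-B(|\nabla v|)\bigr)w\,dx ,
\end{equation}
all integrals being effectively over $\{w>0\}$, where $\nabla w=\nabla u-\nabla v$.

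I would then estimate the three terms in \eqref{wcp-test}. For the left-hand side one invokes strong monotonicity of $\xi\mapsto A(|\xi|)\xi$: by \eqref{hpA2} the eigenvalues of its Jacobian are comparable, with constants depending only on $m_{A},M_{A}$, to $A(|\xi|)$, and the monotonicity structure of the operator (see \cite{CM1,CM2,CM3}) yields, for all $\xi,\eta\in\R^{N}$,
\begin{equation}\label{wcp-mon}
\bigl\langle A(|\xi|)\xi-A(|\eta|)\eta,\xi-\eta\bigr\rangle\ \geq\ C_{1}\,A(|\xi|+|\eta|)\,|\xi-\eta|^{2},\qquad C_{1}=C_{1}(m_{A},M_{A})>0 .
\end{equation}
Hence the left-hand side of \eqref{wcp-test} is $\geq C_{1}\mathcal{J}$, where $\mathcal{J}:=\int_{\Omega'}A(|\nabla u|+|\nabla v|)|\nabla w|^{2}\,dx$; note that $\mathcal{J}$ is finite since on $\{w>0\}$ one has $A(|\nabla u|+|\nabla v|)\,|\nabla w|^{2}\le\bigl(|\nabla u|+|\nabla v|\bigr)A\bigl(|\nabla u|+|\nabla v|\bigr)\,|\nabla w|\le L^{2}A(L)$, again by monotonicity of $sA(s)$. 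For the $B$-term, the mean value theorem together with \eqref{hpB1}, \eqref{hpB3}, monotonicity of $sA(s)$ and $|\nabla u|+|\nabla v|\le L$ give $|B(|\nabla u|)-B(|\nabla v|)|\le\hat{C}L\,A(|\nabla u|+|\nabla v|)\,|\nabla w|$ on $\{w>0\}$. For the $f$-term, as $u,v$ are bounded on $\Omega'$ and $f$ is locally Lipschitz, $\bigl(f(u)-f(v)\bigr)w\le L_{f}w^{2}$.

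Inserting these into \eqref{wcp-test}, applying Young's inequality with the weight $A(|\nabla u|+|\nabla v|)$ to the $B$-term, and reabsorbing a term $\tfrac12 C_{1}\mathcal{J}$, one obtains
$$\tfrac12 C_{1}\mathcal{J}\ \leq\ L_{f}\int_{\Omega'}w^{2}\,dx+C_{2}\int_{\Omega'}A(|\nabla u|+|\nabla v|)\,w^{2}\,dx ,\qquad C_{2}=C_{2}(\hat{C},L,C_{1}).$$
Here the alternative \eqref{hpA4} enters. If $A$ is increasing, then $A(|\nabla u|+|\nabla v|)\le A(L)$ and $A(|\nabla u|)\le A(|\nabla u|+|\nabla v|)$, so the right-hand side is $\le(L_{f}+C_{2}A(L))\int_{\Omega'}w^{2}$ while $\int_{\Omega'}A(|\nabla u|)|\nabla w|^{2}\le\mathcal{J}$ (so that $w\in W^{1,2}_{0,A(|\nabla u|)}(\Omega')$); Theorem \ref{WPI} with $\rho=A(|\nabla u|)$ then gives $\int_{\Omega'}w^{2}\le C_{P}(\Omega')\mathcal{J}$. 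If instead $A$ satisfies the second alternative in \eqref{hpA4}, then $A(|\nabla u|+|\nabla v|)\ge\tau>0$ on the (compact) range of the gradients, so one uses the plain bound $|B(|\nabla u|)-B(|\nabla v|)|\le C_{3}|\nabla w|$, absorbs against $\mathcal{J}\ge\tau\int_{\Omega'}|\nabla w|^{2}$, and uses the classical Poincar\'e inequality on $\Omega'$ in place of Theorem \ref{WPI}. In either case one arrives at $\tfrac12 C_{1}\mathcal{J}\le\Lambda\,\omega(|\Omega'|)\,\mathcal{J}$, with $\Lambda$ independent of $\Omega'$ and $\omega(t)\to0$ as $t\to0^{+}$; choosing $\delta>0$ with $\Lambda\,\omega(\delta)<\tfrac12 C_{1}$ forces $\mathcal{J}=0$ whenever $|\Omega'|\le\delta$, hence $\int_{\Omega'}w^{2}=0$ by the Poincar\'e inequality, i.e.\ $w\equiv0$ and $u\le v$ in $\Omega'$.

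The step I expect to be the true obstacle is exactly this last reabsorption: the weight $A(|\nabla u|+|\nabla v|)$ produced by \eqref{wcp-mon} may vanish (degenerate regime) or blow up (singular regime) along the critical sets $\mathcal{Z}_{u},\mathcal{Z}_{v}$, so it cannot be compared with $1$, nor uniformly with $A(|\nabla u|)$, without additional information. This is precisely the role of the structural dichotomy \eqref{hpA4} and of the regularity theory developed above — Theorem \ref{theoreg1} and Corollary \ref{theoreg2}, in particular $|\mathcal{Z}_{u}|=0$ and the summability of $(A(|\nabla u|))^{-1}$ — which underpin the weighted Poincar\'e inequality of Theorem \ref{WPI} and guarantee that its right-hand side is a genuinely coercive quantity on $W^{1,2}_{0,A(|\nabla u|)}(\Omega')$.
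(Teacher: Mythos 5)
Your proposal follows essentially the same route as the paper's proof: test the difference of the weak formulations with $w=(u-v)^{+}$, lower-bound the principal part by the monotonicity inequality \eqref{supportinequalities}, control the $B$-term via the mean value theorem together with \eqref{hpB3}, the monotonicity of $t\mapsto tA(t)$ and weighted Young's inequality, control the $f$-term by local Lipschitz continuity, reabsorb, and conclude through the weighted Poincar\'e inequality of Theorem \ref{WPI} on a domain of small measure. The one point where you diverge slightly is the role of \eqref{hpA4}: you split the two alternatives explicitly, using the classical Poincar\'e inequality (with the uniform lower bound $A\ge\tau$) in the second case and Theorem \ref{WPI} only in the first, whereas the paper applies Theorem \ref{WPI} in both cases and invokes \eqref{hpA4} to compare $A(|\nabla u|)$ with $A(|\nabla u|+|\nabla v|)$; your rendering makes that compressed step more transparent without changing the substance of the argument.
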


\

The second part of the paper is devoted to the study of the qualitative properties of solutions to the following quasilinear degenerate elliptic problem:
\begin{equation}\label{equ2}
\begin{cases}
-\operatorname{div}(A(|\nabla u|)\nabla u)+B\left( |\nabla u|\right) =f(u) \qquad &\text{in } \Omega\\
u>0 & \text{in } \Omega\\
u=0 &\text{on } \partial \Omega,
\end{cases}
\end{equation}
where $\Omega$ is a bounded smooth domain of $\R^N$, $N \geq 2$, and $f: \R \rightarrow \R$ is a positive locally Lispchitz continuous function. The functions $A(\cdot)$ and $B(\cdot)$ satisfy the previous assumptions \eqref{hpA1}, \eqref{hpA2}, \eqref{hpA3}, \eqref{hpA4}, \eqref{hpB1} and \eqref{hpB3}.

The technique which is mostly used in this part of the paper is the well-known moving plane method which goes back to the seminal works of Alexandrov \cite{A} and Serrin \cite{serrin}. See also the celebrated papers of Berestycki-Nirenberg \cite{BN} and Gidas-Ni-Nirenberg \cite{GNN}. Such a technique can be performed in general domains providing partial monotonicity results near the boundary and symmetry when the domain is convex and
symmetric. In all these papers, the moving plane procedure is applied in the semilinear case, in particular working with the Laplacian. We refer the reader to several papers where it is possible to find a generalization of the moving plane method for equations involving the $p$-Laplace operator in bounded domains \cite{Da2, DP, DamSciunzi, EMS, ES, Sci1, Sci2, SZ}. However, the reader could think to the following modelling problem in the applications:
\begin{equation}\label{model}
	\begin{cases}
		-\Delta_p u + \alpha |\nabla u|^{p} =f(u) \qquad &\text{in } \Omega\\
		u>0 & \text{in } \Omega\\
		u=0 &\text{on } \partial \Omega,
	\end{cases}
\end{equation}
where $\alpha \geq 0$ and $f$ is a locally Lipschitz continuous function with $f(s)>0$ for $s>0$.

We now state our  result:

\begin{thm}\label{symmetrybdd}
	Let $\Omega$ be a bounded smooth
	domain of $\R^N$ which is strictly convex in the
	$x_1$-direction and symmetric with respect to the hyperplane $\{x_1
	= 0\}$. Let $u \in C^{1,\alpha}(\overline{\Omega})$ be a positive solution of problem
	\eqref{equ2} with  $f$ a locally Lipschitz continuous function such that $f(s)>0$ for $s>0$. Moreover, let us assume that $A(\cdot )$ satisfies \eqref{hpA1}, \eqref{hpA2}, \eqref{hpA3}, \eqref{hpA4} and $B(\cdot)$ satisfies \eqref{hpB1}, \eqref{hpB3}. Then it follows that $u$ is symmetric with respect to the hyperplane $\{x_1 = 0\}$ and increasing in the
	$x_1$-direction in $\Omega \cap \{x_1 < 0\}$.\\
	\noindent In particular if the domain is a ball, then the solution is radial and radially decreasing, i.e.
	$$\frac {\partial  u}{\partial  r}(r)<0.$$
\end{thm}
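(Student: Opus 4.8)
The plan is to run the moving plane method of Alexandrov–Serrin in the $x_1$-direction, using the weak comparison principle in small domains (Theorem~\ref{WCP}) to start the procedure and the summability estimate for $1/A(|\nabla u|)$ (Corollary~\ref{theoreg2}) to keep the weighted Sobolev framework available throughout. Fix notation: for $\lambda \in \R$ set $\Omega_\lambda := \{x \in \Omega : x_1 < \lambda\}$, let $x_\lambda := (2\lambda - x_1, x_2, \ldots, x_N)$ be the reflection across $\{x_1 = \lambda\}$, and put $u_\lambda(x) := u(x_\lambda)$. Since $f(u) > 0$ in $\Omega$, Corollary~\ref{theoreg2} gives $|\mathcal{Z}_u| = 0$; the reflected function $u_\lambda$ is again a (weak) solution of \eqref{equ1} in the reflected domain because the operator $-\operatorname{div}(A(|\nabla u|)\nabla u) + B(|\nabla u|)$ depends only on $|\nabla u|$ and is therefore invariant under reflections. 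Let $a := \inf_{x \in \Omega} x_1 < 0$ and define $\Lambda := \{\lambda \in (a, 0] : u \leq u_\lambda \text{ in } \Omega_\lambda\}$; the goal is to show $0 \in \Lambda$ and $\Lambda$ cannot be extended past $0$, which by the usual reflection argument forces symmetry about $\{x_1 = 0\}$, while monotonicity in $\Omega \cap \{x_1 < 0\}$ follows from the strict convexity together with a strong comparison / Hopf-type argument.

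The steps, in order. First, I would prove $\Lambda \neq \emptyset$: for $\lambda$ close to $a$ the cap $\Omega_\lambda$ has small measure, $u = 0 \le u_\lambda$ on $\partial\Omega_\lambda$ (on $\partial\Omega \cap \partial\Omega_\lambda$ one has $u = 0 \le u_\lambda$ by positivity of $u$, and on the flat part $\{x_1 = \lambda\}$ one has $u = u_\lambda$), so Theorem~\ref{WCP} with $\Omega' = \Omega_\lambda$ yields $u \le u_\lambda$ in $\Omega_\lambda$. Hence small $\lambda$ belong to $\Lambda$. Second, set $\lambda_0 := \sup \Lambda$; by continuity of $u$ (and of $u_\lambda$ in $\lambda$), $\lambda_0 \in \Lambda$, i.e. $u \le u_{\lambda_0}$ in $\Omega_{\lambda_0}$. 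Third — the core of the argument — I would show $\lambda_0 = 0$ by contradiction: if $\lambda_0 < 0$, I want to push the plane a little further, i.e. find $\varepsilon > 0$ so that $u \le u_\lambda$ in $\Omega_\lambda$ for $\lambda \in (\lambda_0, \lambda_0 + \varepsilon)$. For this I split $\Omega_\lambda$ into a compact "good" region $K \subset\subset \Omega_{\lambda_0}$ where a strong maximum principle (applied to the linear nondegenerate equation solved by $w := u - u_{\lambda_0}$ away from the critical sets, using $f$ locally Lipschitz so that $f(u) - f(u_{\lambda_0})$ is controlled by $w$) gives $u < u_{\lambda_0}$ strictly, hence by continuity $u < u_\lambda$ on $K$ for $\lambda$ near $\lambda_0$; and the complementary region $\Omega_\lambda \setminus K$, which can be made of small measure by choosing $K$ to nearly exhaust $\Omega_{\lambda_0}$ and $\varepsilon$ small. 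On $\Omega_\lambda \setminus K$ one applies the weak comparison principle in small domains (Theorem~\ref{WCP}) to the function $(u - u_\lambda)^+$, which vanishes on the relevant part of the boundary — on $\partial(\Omega_\lambda\setminus K)\cap K$ because $u<u_\lambda$ there, on $\{x_1=\lambda\}$ because $u=u_\lambda$, and on $\partial\Omega$ because $u=0\le u_\lambda$ — concluding $(u - u_\lambda)^+ \equiv 0$ on $\Omega_\lambda\setminus K$ and thus $\lambda \in \Lambda$, contradicting $\lambda_0 = \sup\Lambda$. Therefore $\lambda_0 = 0$.

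Fourth, running the symmetric procedure from the right endpoint gives $u \ge u_0$ in $\Omega_0$ as well, so $u \equiv u_0$, which is precisely the claimed symmetry $u(x_1, x') = u(-x_1, x')$. Fifth, monotonicity: for $\lambda \in (a, 0)$ we have $u \le u_\lambda$ in $\Omega_\lambda$; I would upgrade this to the strict inequality $u < u_\lambda$ in the interior of $\Omega_\lambda$ via the strong maximum principle applied to $w = u - u_\lambda \le 0$ on each connected component of $\Omega_\lambda \setminus (\mathcal{Z}_u \cup \mathcal{Z}_{u_\lambda})$, and then deduce $u_{x_1} > 0$ in $\Omega \cap \{x_1 < 0\}$ by differentiating: $u_{x_1}$ satisfies the linearized equation away from the critical set and, using $|\mathcal{Z}_u| = 0$ together with a Harnack-type / strong maximum principle argument (as in the $p$-Laplacian references \cite{DP, DamSciunzi, Sci1}) in the weighted space, one rules out $u_{x_1}$ vanishing on a set of positive measure. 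Strict convexity of $\Omega$ in the $x_1$-direction is what guarantees that the caps $\Omega_\lambda$ stay on one side and that, when $\Omega$ is a ball, symmetry in every direction through the center gives radial symmetry with $\partial u/\partial r < 0$.

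The main obstacle I anticipate is Step~3, specifically making the "strong maximum principle on the good set + weak comparison on the small bad set" splitting rigorous in the degenerate, non-power setting: the linearized operator degenerates on $\mathcal{Z}_u\cup\mathcal{Z}_{u_{\lambda_0}}$, so the strong maximum principle is only available on the nondegenerate region, and one must know that the critical set does not disconnect $\Omega_{\lambda_0}$ in a way that blocks propagation of the strict inequality. This is exactly where Corollary~\ref{theoreg2} ($|\mathcal{Z}_u| = 0$, and more quantitatively the $L^\sigma$ bound on $1/A(|\nabla u|)$) and the weighted Poincaré inequality (Theorem~\ref{WPI}) must be combined: the weighted Sobolev space $W^{1,2}_\rho$ with $\rho = A(|\nabla u|)$ provides the correct functional framework in which $(u-u_\lambda)^+$ lives and in which Theorem~\ref{WCP} is proved, so all the estimates are consistent. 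A secondary technical point is checking that assumption \eqref{hpA4} (monotonicity of $A$ or of $t\mapsto tA(t)$ together with positivity on compacts) is precisely what is needed to make the comparison between $A(|\nabla u|)\nabla u$ and $A(|\nabla u_\lambda|)\nabla u_\lambda$ monotone — i.e. the vector field $\xi \mapsto A(|\xi|)\xi$ is monotone — which is the algebraic heart of every comparison step.
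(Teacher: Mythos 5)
Your overall scaffolding (moving plane in $x_1$, initialization, $\lambda_0=\sup\Lambda$, contradiction by splitting $\Omega_{\lambda_0+\tau}$ into a compact good set and a small bad set, WCP on the small set) matches the paper's structure, and your initialization via Theorem~\ref{WCP} on a thin cap is a legitimate alternative to the paper's Hopf-based start. However, there is a genuine gap precisely at the point you yourself flag as "the main obstacle," and you do not resolve it correctly. When you write that the strong maximum principle applied to $w=u-u_{\lambda_0}$ away from the critical set "gives $u<u_{\lambda_0}$ strictly," this is not what the strong maximum principle gives: on each connected component $\mathcal{C}$ of $\Omega_{\lambda_0}\setminus\mathcal{Z}_u$ it only yields the dichotomy $u<u_{\lambda_0}$ in $\mathcal{C}$ \emph{or} $u\equiv u_{\lambda_0}$ in $\mathcal{C}$. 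If the second alternative occurs on some component of positive measure, your splitting fails: you cannot include that component in a compact $K$ where the inequality is strict and uniform (there is no gap to exploit via continuity as $\lambda\to\lambda_0^+$), nor can you leave it out of $K$ and still make $|\Omega_{\lambda_0+\tau}\setminus K|$ small. Your suggestion that Corollary~\ref{theoreg2} ($|\mathcal{Z}_u|=0$) and Theorem~\ref{WPI} "must be combined" does not address this: neither of them rules out a local symmetry region on which $u\equiv u_{\lambda_0}$.

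The paper resolves exactly this via Lemma~\ref{conncomp}, which is the dedicated and nontrivial new ingredient: if $\mathcal{C}_\lambda$ is a connected component of $\Omega_\lambda\setminus\mathcal{Z}_u$ on which $u\equiv u_\lambda$, then $\mathcal{C}_\lambda=\emptyset$. Its proof is not a soft consequence of the regularity estimates; it uses a carefully chosen test function $\Psi_\varepsilon=e^{-ku}\,G_\varepsilon(|\nabla u|)/|\nabla u|\cdot\chi_{\mathcal{C}}$ (with $\mathcal{C}=\mathcal{C}_\lambda\cup R_\lambda(\mathcal{C}_\lambda)$), exploits the sign condition $f>0$, uses the integrability of $A(|\nabla u|)\|D^2u\|$ coming from Theorem~\ref{theoreg1}, and crucially chooses $k=\hat{C}$ so that the term $B(|\nabla u|)-k|\nabla u|^2A(|\nabla u|)$ is nonpositive, absorbing the first-order term $B$ via \eqref{hpB1}--\eqref{hpB3}. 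Without this lemma (or some substitute argument that excludes local symmetry phenomena for the operator including the $B(|\nabla u|)$ term) your Step~3 does not close. A secondary, more minor point: to choose the neighborhood $\mathcal{A}\supset\mathcal{Z}_u\cap\Omega_{\lambda_0}$ compactly contained in $\Omega$ and of arbitrarily small measure, the paper uses Hopf's boundary lemma to get $\mathcal{Z}_u\subset\subset\Omega$; your WCP-based initialization sidesteps Hopf at the start, but you will still need $\mathcal{Z}_u\subset\subset\Omega$ (hence Hopf) for this later step.
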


\noindent Our paper is structured as follows:

\begin{itemize}
	\item[-] In Section \ref{sec2} we prove the local regularity results given by Theorem \ref{theoreg1} and by Corollary \ref{theoreg2} for equation \eqref{equ1}. As a consequence of these results we obtain two essential tools given by the weak comparison principle to compare solutions of \eqref{equ1} and by the weighted Poincar\'e type inequality.
	
	\item[-] In Section \ref{sec3} we show Theorem \ref{symmetrybdd}. We prove here the key Lemma \ref{conncomp} borrowing some ideas contained in \cite{BEV, EMM, EMS, MMPS}. After that, we develop a nice variant of the well-known moving plane method of Alexandrov and Serrin (see \cite{A,serrin}).
\end{itemize}

\section{Second order regularity of the solutions} \label{sec2}
The aim of this section is to prove the second order regularity result of solutions to \eqref{equ1}. \\

We start this section by proving the first local regularity result:

\begin{proof}[Proof of Theorem \ref{theoreg1}]
We start observing that, if $u\in C^{1,\alpha }(\Omega )\cap C^{2}(\Omega
\setminus \mathcal{Z}_u)$ is a solution to \eqref{equ1}, then any derivative of the
solution $u$, is also a solution to the linearized equation
\begin{equation}  \label{linequ}
\begin{split}
L_{u}(u_{i},\psi )=&\int_\Omega A(|\nabla u|)(\nabla u_{i},\nabla \psi )\,dx+
\int_\Omega \frac{A^{\prime }\left( \left\vert \nabla u\right\vert \right) }{\left\vert \nabla u\right\vert }\left( \nabla u,\nabla u_{i}\right) \left( \nabla u,\nabla \psi \right) dx\\
&+\int_\Omega \frac{B^{\prime }\left( \left\vert \nabla u\right\vert \right) }{\left\vert \nabla u\right\vert }\left( \nabla u,\nabla u_{i}\right) \psi \,dx-\int_\Omega f_{i} \, \psi \,dx=0,
\end{split}
\end{equation}
for every $\psi \in C_{c}^{\infty }(\Omega \setminus \mathcal{Z}_u)$. This follows
just putting $\psi _{i}$ as test function in \eqref{wequ1} and integrating
by parts.

We will use a regularization argument. For every $\varepsilon >0$ we set 
$$ G_{\varepsilon }(t):= (2t-2\varepsilon )\chi _{\lbrack \varepsilon
,2\varepsilon ]}(t)+t\chi _{\lbrack 2\varepsilon ,\infty )}(t)  \quad \text{for } t>0$$
and $G_\varepsilon(t):=-G_{\varepsilon }(-t)$ for $t\leq 0$, where $\displaystyle \chi_{\lbrack a,b]}(\cdot )$ denotes the characteristic function of $[a,b]$. We will assume that the ball $B_{2\rho }(x_{0})$ is contained in $\Omega $ and
we consider a cut-off function $\varphi_\rho \in C_{c}^{\infty }(B_{2\rho }(x_{0}))$ such that%
\begin{equation}  \label{cut_fun}
\varphi_\rho =1\text{ in }B_{\rho }(x_{0})\text{ and }|\nabla \varphi_\rho |\leq \frac{%
2}{\rho }\,\text{.}
\end{equation}

\noindent For $\beta \in \lbrack 0,1)$ and $\gamma <N-2$ if $N\geq 3$, or $\gamma
=0 $ for $N=2$, we set%
\begin{equation}  \label{T_H}
T_{\varepsilon }(t):=\frac{G_{\varepsilon }(t)}{|t|^{\beta }}\ , \ \ H_{\delta}(t):= \frac{G_{\delta}(t)}{|t|^{\gamma+1}}\ .
\end{equation}
We consider the test function
\begin{equation}  \label{test_fun}
\psi =T_{\varepsilon }(u_{i})\ H_{\delta }(|x-y|)\ \varphi_\rho ^{2} \\
=T_{\varepsilon }(u_{i})\ H_{\delta }\ \varphi_\rho ^{2}\,.
\end{equation}

\noindent According to \eqref{T_H}, it follows that such a test function can be
plugged in the linearized equation \eqref{linequ}, since it vanishes in a
neighbourhood of the critical set $\mathcal{Z}_u$. Consequently we get 
\begin{equation}\label{lin_phi}
\begin{split}
\int_{\Omega}A(|\nabla u|)|\nabla u_{i}|^2 T'_{\varepsilon}(u_{i}) H_{\delta} \varphi_\rho^2 \,dx &+\int_{\Omega}\frac{A'(|\nabla u|)}{|\nabla u|}(\nabla u,\nabla u_{i})^2 T'_{\varepsilon}(u_{i}) H_{\delta} \varphi_\rho^2\,dx\\ 
&+\int_{\Omega}A(|\nabla u|)(\nabla u,\nabla_x H_\delta) T_\varepsilon(u_{i}) \varphi_\rho^2\,dx \\
&+\int_{\Omega}\frac{A'(|\nabla u|)}{|\nabla u|}(\nabla u,\nabla u) (\nabla u,\nabla_x H_\delta) T_\varepsilon (u_{i}) \varphi_\rho^2\,dx\\
&+2\int_{\Omega}A(|\nabla u|)(\nabla u,\nabla \varphi_\rho) T_\varepsilon(u_{i}) H_\delta  \varphi_\rho \,dx\\
&+ 2\int_{\Omega}\frac{A'(|\nabla u|)}{|\nabla u|}(\nabla u,\nabla u) (\nabla u,\nabla \varphi_\rho) T_{\varepsilon }(u_{i})\ H_{\delta } \varphi_\rho \,dx\\
&+\int_{\Omega}\frac{B'\left(|\nabla u|\right)}{\left\vert \nabla u\right\vert}(\nabla u,\nabla u_{i})T_{\varepsilon}(u_{i})H_{\delta}\varphi_\rho ^{2}\,dx\\
&=\int_\Omega f_{i} \cdot T_{\varepsilon}(u_{i})H_{\delta}\varphi_\rho ^{2}\,dx\,.
\end{split}
\end{equation}
Now we set
\begin{equation}  \label{intI}
\begin{split}
I_{1}(\varepsilon ,\delta)=&\int_{\Omega}A(|\nabla u|)|\nabla u|^2 T'_{\varepsilon}(u_{i}) H_{\delta} \varphi_\rho^2\,dx \\
I_{2}(\varepsilon ,\delta)=&\int_{\Omega}\frac{A'(|\nabla u|)}{%
|\nabla u|}(\nabla u,\nabla u_{i})^{2}\ T_{\varepsilon }^{\prime }(u_{i})\
H_{\delta }\ \varphi_\rho ^{2}\,dx\\
I_{3}(\varepsilon ,\delta )=&\int_\Omega A(|\nabla u|)(\nabla u_{i},\nabla
_{x}H_{\delta })\ T_{\varepsilon }(u_{i})\ \varphi_\rho ^{2}\,dx \\
I_{4}(\varepsilon ,\delta )=&\int_\Omega \frac{A^{\prime }(|\nabla u|)}{%
|\nabla u|}(\nabla u,\nabla u_{i})(\nabla u,\nabla _{x}H_{\delta })\
T_{\varepsilon }(u_{i})\ \varphi_\rho ^{2}\,dx \\
I_{5}(\varepsilon ,\delta )=&2\int_\Omega A(|\nabla u|)(\nabla u_{i},\nabla
\varphi_\rho )\ T_{\varepsilon }(u_{i})\ H_{\delta } \varphi_\rho \,dx \\
I_{6}(\varepsilon ,\delta )=&2\int_\Omega \frac{A^{\prime }(|\nabla u|)}{|\nabla u|}(\nabla u,\nabla u_{i})(\nabla u,\nabla \varphi_\rho ) T_{\varepsilon}(u_{i})\ H_{\delta }\ \varphi_\rho \,dx \\
I_{7}(\varepsilon ,\delta )=&\int_\Omega \frac{B^{\prime }\left( |\nabla u|\right) }{\left\vert \nabla u\right\vert }(\nabla u,\nabla
u_{i})T_{\varepsilon }(u_{i})H_{\delta }\varphi_\rho ^{2}\,dx \\
I_{8}(\varepsilon ,\delta)=&\int_\Omega f_{i}\cdot T_{\varepsilon
}(u_{i})H_{\delta }\varphi_\rho ^{2}\,dx\,.
\end{split}
\end{equation}

\noindent Regarding the terms $I_{1}$ and $I_{2}$ we note that%
\begin{equation}\label{ip1}
I_{1}+I_{2}\geq \int_\Omega A(|\nabla u|)|\nabla u_{i}|^{2}\
T_{\varepsilon }^{\prime }(u_{i})\ H_{\delta }\ \varphi_\rho ^{2}\,dx
\end{equation}
when $A^{\prime }(|\nabla u|)$ is nonnegative, while
\begin{equation}
\begin{split}
I_1+I_2\geq&\int_\Omega A(|\nabla u|)|\nabla u_{i}|^{2}\
T_{\varepsilon}^{\prime}(u_{i})\ H_{\delta }\ \varphi_\rho ^{2}\,dx \\
&-\int_\Omega |A^{\prime }(|\nabla u|)||\nabla u||\nabla u_{i}|^{2}\
T_{\varepsilon }^{\prime }(u_{i})\ H_{\delta }\ \varphi_\rho ^{2}\,dx
\end{split}
\end{equation}
when $A^{\prime }(|\nabla u|)$ is negative. Therefore, by \eqref{hpA2}, in this case we have that
\begin{equation}\label{ip2}
I_{1}+I_{2}\geq (1+m_{A})\,I_{1}\,.
\end{equation}
Hence, by \eqref{lin_phi}, \eqref{ip1} and \eqref{ip2}, we deduce
\begin{equation}  \label{stima_i}
\min\{1, 1+m_{A}\}\int_\Omega A(|\nabla u|)|\nabla u_{i}|^{2}\ T_{\varepsilon
}^{\prime }(u_{i})\ H_{\delta }\ \varphi_\rho ^{2}\,dx\leq |I_{3}|+...+|I_{8}|\,.
\end{equation}

Now we give an estimate of the right hand side of \eqref{stima_i}. In particular, using  the fact that $|T_{\varepsilon }(t)|\leq t^{1-\beta}$ and the weighted Young's inequality $ab\leq \vartheta a^{2}+\frac{b^{2}}{%
4\vartheta }$, we deduce that%
\begin{equation}  \label{I3_I4}
\begin{split}
&\limsup_{\delta \rightarrow 0}(|I_{3}|+|I_{4}|) \\
&\leq  (\gamma+1) \int_{\Omega}\left(A(|\nabla u|)+|A^{\prime}(|\nabla u|)||\nabla u|\right) |\nabla u_{i}|\frac{|T_{\varepsilon}(u_{i})|}{|x-y|^{\gamma +1}}\varphi_\rho ^{2}\,dx \\
&\underset{\text{by } \eqref{hpA2}}{\leq} (\gamma+1) (1+|M_{A}|)\int_\Omega A(|\nabla u|)|\nabla u_{i}|\frac{|T_{\varepsilon} (u_{i})|}{|x-y|^{\gamma +1}}\varphi_\rho^2\,dx \\
&\leq(\gamma+1) (1+|M_{A}|)\int_{\Omega}\frac{\sqrt{A(|\nabla u|)} |\nabla
u_{i}|}{|x-y|^{\frac{\gamma}{2}}|u_{i}|^{\frac{\beta}{2}}} \chi_{\{|u_{i}|\geq \varepsilon \}} \varphi_\rho  \frac{\sqrt{A(|\nabla u|)} |u_{i}|^{\frac{2-\beta }{2}}}{|x-y|^{\frac{\gamma +2}{2}}}\varphi_\rho \,dx \\
&\leq \vartheta \int_\Omega \frac{A(|\nabla u|)|\nabla u_{i}|^{2}}{%
|x-y|^{\gamma }|u_{i}|^{\beta }}\chi _{\{|u_{i}|\geq \varepsilon \}}\varphi_\rho
^{2}\,dx+\frac{(\gamma+1)^{2}(1+|M_{A}|)^{2}}{4\vartheta }\int_\Omega \frac{%
A(|\nabla u|)|u_{i}|^{2-\beta }}{|x-y|^{\gamma +2}}\varphi_\rho ^{2}\,dx \\
&\leq \vartheta \int_\Omega \frac{A(|\nabla u|)|\nabla u_{i}|^{2}}{%
|x-y|^{\gamma }|u_{i}|^{\beta }}\chi _{\{|u_{i}|\geq \varepsilon \}}\varphi_\rho
^{2}\,dx+\frac{(\gamma+1)^{2}(1+|M_{A}|)^{2}}{4\vartheta }\int_\Omega \frac{%
A(|\nabla u|)|\nabla u|^{2-\beta }}{|x-y|^{\gamma +2}}\varphi_\rho ^{2}\,dx \\
&\leq \vartheta \int_\Omega \frac{A(|\nabla u|)|\nabla u_{i}|^{2}}{%
|x-y|^{\gamma }|u_{i}|^{\beta }}\chi _{\{|u_{i}|\geq \varepsilon \}}\varphi_\rho
^{2}\,dx+C_{3,4}(\gamma, M_A, M, L, \vartheta),
\end{split}
\end{equation}
where, in the last line of \eqref{I3_I4}, we also used the fact that $tA(t)$ is locally bounded and we have set
\begin{equation}
\begin{split}
M&:=\max \left\{ \sup_{y\in \Omega }\int_{B_{2\rho }(x_{0})}\frac{1}{|x-y|^{\gamma}}dx; \ \sup_{y\in \Omega }\int_{B_{2\rho }(x_{0})}\frac{1}{|x-y|^{\gamma +2}}dx\right\} , \\
L&:=\sup_{x\in B_{2\rho }(x_{0})}|\nabla u|\,.
\end{split}
\end{equation}
Exploiting the fact that $|\nabla \varphi_\rho |\leq \frac{2}{\rho }$, $%
|T_{\varepsilon }(t)|\leq t^{1-\beta }$ and the weighted Young's inequality, we also
get that%
\begin{equation}  \label{I5_I6}
\begin{split}
\limsup_{\delta \rightarrow 0}(&|I_{5}|+|I_{6}|)\\
&\leq 2\int_{\Omega}\left[A(|\nabla u|)+|A'(|\nabla u|)||\nabla u| \right] |\nabla u_{i}||\nabla \varphi_\rho |\frac{|T_{\varepsilon }(u_{i})|}{|x-y|^{\gamma }}\varphi_\rho \,dx \\
&\underset{\text{by } \eqref{hpA2}}{\leq} \frac{4(1+|M_{A}|)}{\rho }\int_\Omega \frac{\sqrt{A(|\nabla u|)}\
|\nabla u_{i}|}{|x-y|^{\frac{\gamma }{2}}|u_{i}|^{\frac{\beta }{2}}}\chi
_{\{|u_{i}|\geq \varepsilon \}}\varphi_\rho \ \ \frac{\sqrt{A(|\nabla u|)}\
|u_{i}|^{\frac{2-\beta }{2}}}{|x-y|^{\frac{\gamma }{2}}}\,dx \\
&\leq \vartheta \int_\Omega \frac{A(|\nabla u|)|\nabla u_{i}|^{2}}{|x-y|^{\gamma}|u_{i}|^{\beta }}\chi _{\{|u_{i}|\geq \varepsilon \}}\varphi_\rho
^{2}\,dx+\frac{4(1+|M_{A}|)^{2}}{\vartheta \rho ^{2}}\int_{B_{2\rho }(x_{0})}%
\frac{A(|\nabla u|)|\nabla u|^{2-\beta }}{|x-y|^{\gamma }}\,dx \\
&\leq \vartheta \int_\Omega \frac{A(|\nabla u|)|\nabla u_{i}|^{2}}{%
|x-y|^{\gamma }|u_{i}|^{\beta }}\chi _{\{|u_{i}|\geq \varepsilon \}}\varphi_\rho
^{2}\,dx+C_{5,6}(M_A, M,L, \vartheta, \varrho)\,,
\end{split}
\end{equation}
where in the last line of \eqref{I5_I6} we used the fact that the function $tA(t)$ is locally bounded.

Now, exploiting \eqref{hpA3}, \eqref{hpB3} and applying the weighted Young's inequality we obtain
\begin{equation}\label{I7} 
\begin{split}
\limsup_{\delta \rightarrow 0}|I_{7}|&\leq \int_\Omega |B^{\prime}\left(\left\vert \nabla u\right\vert \right) ||\nabla u_{i}|\frac{|T_{\varepsilon}(u_{i})|}{|x-y|^{\gamma }}\varphi_\rho ^{2}\,dx  \\
&\underset{\text{by } \eqref{hpB3}}{\leq} \hat{C}(1+|M_A|)\int_\Omega \frac{\sqrt{A(|\nabla u|)}\ |\nabla u_{i}|}{|x-y|^{\frac{\gamma}{2}}|u_{i}|^{\frac{\beta }{2}}}\chi_{\{|u_{i}|\geq \varepsilon\}} \varphi_\rho \ \ \frac{|\nabla u|\sqrt{A(|\nabla u|)} \ |u_{i}|^{\frac{2-\beta }{2}}}{|x-y|^{\frac{\gamma}{2}}}\varphi_\rho \,dx \\
&\underset{\text{by } \eqref{hpA3}}{\leq} \vartheta \int_\Omega \frac{A(|\nabla u|)|\nabla u_{i}|^{2}}{|x-y|^{\gamma}|u_{i}|^\beta} \chi_{\{|u_{i}|\geq \varepsilon \}} \varphi_\rho^2\,dx\\
& \qquad \; \; + \frac{\hat{C}^2(1+|M_A|)^2}{4 K\vartheta }\int_\Omega \frac{|\nabla u|^2 A(|\nabla u|)|\nabla u|^{2-\beta}}{|x-y|^\gamma} \varphi_\rho ^{2}\,dx \\
&\underset{\text{by } \eqref{hpA1}}{\leq} \vartheta  \int_\Omega \frac{A(|\nabla u|)|\nabla u_{i}|^{2}}{|x-y|^{\gamma }|u_{i}|^\beta} \chi _{\{|u_{i}|\geq \varepsilon \}} \varphi_\rho^2\,dx + \frac{L^2\hat{C}^2(1+|M_A|)^2}{4K\vartheta} \int_\Omega \frac{|\nabla u|^{1-\beta}}{|x-y|^{\gamma}}\varphi_\rho^2\,dx \\
&\leq \vartheta  \int_\Omega \frac{A(|\nabla u|)|\nabla u_{i}|^{2}}{|x-y|^{\gamma} |u_{i}|^{\beta}} \chi _{\{|u_{i}|\geq \varepsilon \}}\varphi_\rho^{2} \,dx+C_7(M_A, M, L, K, \vartheta)\,,
\end{split}
\end{equation}
where in the last line of \eqref{I7} we used the fact that the function $tA(t)$ is locally bounded.

Finally, setting $\displaystyle F=\sup_{x\in B_{2\rho }(x_{0})}\sum_{i=1}^{N}|f_{i}(x)|$,
we get
\begin{equation}\label{I8} 
\limsup_{\delta \rightarrow 0}|I_{8}|\leq F\int_\Omega \frac{|u_{i}|^{1-\beta }}{|x-y|^{\gamma}} \varphi_\rho ^{2}\,dx\leq C_8(F,M,L)\,.
\end{equation}
Taking into account \eqref{stima_i}, letting $\delta \rightarrow 0$,
exploiting the above estimates \eqref{I3_I4}, \eqref{I5_I6}, \eqref{I7}, \eqref{I8} and evaluating $T_{\varepsilon }^{\prime }$,
\begin{equation}
\begin{split}
\min\{1,&1+m_{A}\}\int_{\Omega} \frac{A(|\nabla u|)|\nabla u_{i}|^{2}}{|x-y|^{\gamma}} \left(\frac{G'_{\varepsilon}(u_{i})}{|u_{i}|^{\beta}}-\beta 
\frac{G_{\varepsilon}(u_{i})}{|u_{i}|^{1+\beta}}\right) \varphi_\rho^2\\
&-3\vartheta \int_\Omega \frac{A(|\nabla u|)|\nabla u_{i}|^{2}}{|x-y|^\gamma |u_{i}|^\beta} \chi_{\{|u_{i}|\geq \varepsilon\}}\varphi_\rho^2 \leq C_{3,4} + C_{5,6}  + C_7 + C_8 \,.
\end{split}
\end{equation}
Now we fix $\vartheta $ sufficiently small such that%
\begin{equation}  \label{stima_G}
\min\{1,1+m_{A}\}(1-\beta) -3\vartheta >0
\end{equation}
so that, passing to the limit for $\varepsilon \rightarrow 0$ and applying Fatou's Lemma we obtain
\begin{equation}
\int_{B_{\rho }(x_{0})}\frac{A(|\nabla u|)|\nabla u_{i}|^{2}}{|x-y|^{\gamma
}|u_{i}|^{\beta }}\,dx\leq \int_{B_{2\rho }(x_{0})}\frac{A(|\nabla
u|)|\nabla u_{i}|^{2}}{|x-y|^{\gamma }|u_{i}|^{\beta }}\varphi_\rho ^{2}\,dx\leq \mathcal{C}
\end{equation}
where $\mathcal{C}=\mathcal{C}(\gamma,m_A,M_A, \beta, f,\|\nabla u\|_\infty,\rho ,x_{0})$.
\end{proof}

As a consequence of this result it is possible to show Corollary \ref{theoreg2}.

\begin{proof}[Proof of Corollary \ref{theoreg2}]
	The proof repeats verbatim the arguments exploited in \cite[Theorem 1.3]{CDS}.
	
\end{proof}

As remarked in the introduction we remark that the global regularity results, in particular Theorem \ref{theoreg1} and Corollary \ref{theoreg2} proved in the whole $\Omega$, follow via a covering argument that can be found in \cite{CDS}. However we will show that we can efficiently work in the weighted Sobolev space $W^{1,2}_\rho(\Omega)$ using only the estimates proved in Theorem \ref{theoreg1}. 

Recall that, if $\rho \in L^{1}(\Omega)$, the space $W^{1,2}_\rho(\Omega)$ is defined
as the completion of $C^{\infty }(\overline{\Omega})$ under the norm
\begin{equation}\label{hthInorI}
\| v\|_{W^{1,2}_\rho}= \| v\|_{L^2 (\Omega)}+\| \nabla v\|_{L^2
	(\Omega, \rho)}
\end{equation}
where
$$
\| \nabla v\|^2_{L^2 (\Omega, \rho)}=\int_{\Omega}\rho |\nabla v|^2 \,dx.
$$
We also recall that $W^{1,2}_{0, \rho}$  is defined
as the completion of  $C^{\infty}_c(\overline{\Omega})$ under the norm
\begin{equation}\label{hthInorII}
\| v\|_{W^{1,2}_{0,\rho}}= \| \nabla v\|_{L^2 (\Omega, \rho)}.
\end{equation}

\begin{thm}[Weighted Sobolev inequality, \cite{DamSciunzi}]\label{thm: Sobolev}
	Let $\rho$ be a weight function such that
	\begin{equation}\label{eq:weight}
	\int_{\Omega} \frac{1}{\rho^\sigma|x-y|^{\gamma}}\leq \mathcal{C},
	\end{equation}
	with $1< \sigma <1+\frac{1}{\tilde{\vartheta}}$, $\gamma <N-2$ if $N\geq 3$, $%
	\gamma =0$ if $N=2$. Assume, in the case $N\geq 3$, without no loss of generality that
	$$\gamma>N-2 \sigma,$$
	which implies $N\sigma-2N+2\sigma+\gamma>0$. Then,  for any $w\in H^{1,2}_{0,\rho}(\Omega)$, there exists a constant $C_{s_\rho}$ such that
	\begin{equation}\label{FTFTnddjncj}
	\|w\|_{L^q(\Omega)}\leq C_{s_\rho}\|\nabla w\|_{L^2(\Omega,\rho)},
	\end{equation}
	for any $1\leq q< 2^*(\sigma )$ where
	\begin{equation}\label{eq:2*}
	\frac{1}{2^*(\sigma)}=\frac 12 - \frac 1N + \frac 1\sigma \left( \frac 12 - \frac{\gamma}{2N}\right).
	\end{equation}
\end{thm}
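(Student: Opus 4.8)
The plan is to reduce \eqref{FTFTnddjncj} to two standard ingredients: H\"older's inequality, to trade the weight $\rho$ on the gradient for a power weight $|x-y|^{-\gamma'}$, and a Hardy--Sobolev inequality of Caffarelli--Kohn--Nirenberg type to absorb the latter. Since, by definition, $W^{1,2}_{0,\rho}(\Omega)$ is the completion of $C_c^{\infty}(\Omega)$ under $\|\nabla\,\cdot\,\|_{L^2(\Omega,\rho)}$, and since the weighted Poincar\'e inequality of Theorem~\ref{WPI} embeds it continuously into $L^2(\Omega)\supseteq L^q(\Omega)$ (here $\Omega$ is bounded and $q>2$), it is enough to prove \eqref{FTFTnddjncj} for $w\in C_c^{\infty}(\Omega)$, extended by zero to all of $\R^N$, and then pass to the limit.

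For the first step I would put $t:=\frac{2\sigma}{\sigma+1}$ and $\gamma':=\frac{\gamma}{\sigma+1}$; note $1<t<2\le N$ since $\sigma>1$. Factoring $|\nabla w|^{t}\,|x-y|^{-\gamma'}=\big(\rho\,|\nabla w|^{2}\big)^{t/2}\cdot\big(\rho^{-t/2}\,|x-y|^{-\gamma'}\big)$ and applying H\"older's inequality with conjugate exponents $\frac2t$ and $\frac{2}{2-t}$, the second factor produces $\big(\int_{\Omega}\rho^{-t/(2-t)}\,|x-y|^{-2\gamma'/(2-t)}\,dx\big)^{(2-t)/2}$; the choice of $t$ and $\gamma'$ is made precisely so that $\frac{t}{2-t}=\sigma$ and $\frac{2\gamma'}{2-t}=\gamma$, whence this factor is $\le\mathcal{C}^{(2-t)/2}$ by \eqref{eq:weight}. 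This yields
\[
\Big(\int_{\Omega}\frac{|\nabla w|^{t}}{|x-y|^{\gamma'}}\,dx\Big)^{1/t}\ \le\ \mathcal{C}^{1/(2\sigma)}\,\Big(\int_{\Omega}\rho\,|\nabla w|^{2}\,dx\Big)^{1/2}=\mathcal{C}^{1/(2\sigma)}\,\|\nabla w\|_{L^2(\Omega,\rho)},
\]
a purely computational step once the exponents are identified.

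For the second step I would invoke the Caffarelli--Kohn--Nirenberg inequality centered at $y$,
\[
\|w\|_{L^{2^{*}(\sigma)}(\R^N)}\ \le\ C(N,\sigma,\gamma)\,\big\|\,|x-y|^{-\gamma'/t}\,\nabla w\,\big\|_{L^{t}(\R^N)},\qquad w\in C_c^{\infty}(\R^N),
\]
whose scaling invariance forces $\frac{1}{2^{*}(\sigma)}=\frac1t-\frac1N\big(1+\frac{\gamma'}{t}\big)$; using $t=\frac{2\sigma}{\sigma+1}$ and $\frac{\gamma'}{t}=\frac{\gamma}{2\sigma}$, a direct computation identifies the right-hand side with the expression in \eqref{eq:2*}. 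The admissibility hypotheses of that inequality --- essentially $1<t<N$ and that the gradient weight exponent $a:=\frac{\gamma}{2\sigma}$ lies in the admissible range, in particular $-1<a<\frac{N-t}{t}$ --- are exactly what is granted by $1<\sigma<1+\frac{1}{\tilde\vartheta}$, by $\gamma<N-2$, and by the normalization $\gamma>N-2\sigma$; the point of that normalization is that it is equivalent to $2^{*}(\sigma)>2$, hence to $a>-1$, which is why it may be assumed without loss of generality. (Alternatively, this step can be carried out by hand from the pointwise bound $|w(x)|\le c_N\int_{\R^N}|x-z|^{1-N}|\nabla w(z)|\,dz$ together with a Stein--Weiss type weighted Hardy--Littlewood--Sobolev inequality.) Combining the two displays gives $\|w\|_{L^{2^{*}(\sigma)}(\Omega)}\le C(N,\sigma,\gamma)\,\mathcal{C}^{1/(2\sigma)}\,\|\nabla w\|_{L^2(\Omega,\rho)}$; for $1\le q<2^{*}(\sigma)$ the conclusion then follows a fortiori by H\"older on the bounded set $\Omega$, namely $\|w\|_{L^q(\Omega)}\le|\Omega|^{\frac1q-\frac1{2^{*}(\sigma)}}\|w\|_{L^{2^{*}(\sigma)}(\Omega)}$, so that $C_{s_\rho}=C_{s_\rho}(N,\sigma,\gamma,q,|\Omega|,\mathcal{C})$. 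I expect the genuine obstacle to be this second step: pinning down the precise form and sharp range of validity of the weighted Sobolev inequality with the weight $|x-y|^{-\gamma/(2\sigma)}$ (which is singular at $y$ when $\gamma>0$), and checking carefully that its hypotheses are implied by the stated constraints on $\sigma$ and $\gamma$; by contrast, the H\"older reduction and the approximation argument are routine.
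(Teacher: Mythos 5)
The paper does not prove Theorem~\ref{thm: Sobolev}; it is stated as a quoted result with the reference \cite{DamSciunzi}, so there is no ``paper's own proof'' to compare with line by line. Your first step, however, is exactly the natural Hölder reduction and is correct: with $t=\tfrac{2\sigma}{\sigma+1}$ and $\gamma'=\tfrac{\gamma}{\sigma+1}$ one has $\tfrac{t}{2-t}=\sigma$, $\tfrac{2\gamma'}{2-t}=\gamma$, $\tfrac{2-t}{2t}=\tfrac1{2\sigma}$, and \eqref{eq:weight} gives
$\Big(\int_\Omega |x-y|^{-\gamma'}|\nabla w|^t\,dx\Big)^{1/t}\le\mathcal{C}^{1/(2\sigma)}\|\nabla w\|_{L^2(\Omega,\rho)}$.

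The gap is in the second step, and it is not merely a matter of ``pinning down the sharp range'': the inequality you invoke,
$\|w\|_{L^{2^*(\sigma)}(\R^N)}\le C\,\big\||x-y|^{-\gamma'/t}\nabla w\big\|_{L^t(\R^N)}$,
is \emph{false} on $\R^N$ whenever $\gamma>0$. With $a:=\gamma'/t=\tfrac{\gamma}{2\sigma}>0$ the weight $|x-y|^{-a}$ decays away from $y$; taking a fixed bump $\psi\in C_c^\infty(B_1)$ and $w_R(x)=\psi(x-y-Re_1)$, the left-hand side $\|w_R\|_{L^{2^*(\sigma)}}$ is constant in $R$ while $\big\||x-y|^{-a}\nabla w_R\big\|_{L^t}\le (R-1)^{-a}\|\nabla\psi\|_{L^t}\to 0$, a contradiction. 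Equivalently, in the Caffarelli--Kohn--Nirenberg / Stein--Weiss scheme $\|\,|x|^{-b}u\|_{L^q}\le C\||x|^{-a}\nabla u\|_{L^p}$ the admissible range with $b=0$ forces $a\le 0$ (this is the $a\le b$ constraint, or the $\alpha+\beta\ge0$ condition in Stein--Weiss), so the scaling-critical exponent one can reach this way is only $q\le t^*=\tfrac{Nt}{N-t}$, which is \emph{strictly smaller} than $2^*(\sigma)$ for $\gamma>0$. Your stated admissibility condition ``$-1<a<\tfrac{N-t}{t}$'' is not the right one (it allows $a>0$), and the claim that the normalization $\gamma>N-2\sigma$ is equivalent to $a>-1$ is also off: $\gamma>N-2\sigma$ is equivalent to $2^*(\sigma)>2$, whereas $a>-1$ is the weaker $\gamma>-2\sigma$. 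The alternative route via $|w|\lesssim I_1(|\nabla w|)$ and a weighted HLS inequality fails for the same reason.

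What your argument discards, and what the proof in \cite{DamSciunzi} actually uses, is the \emph{uniformity in $y$} of the bound \eqref{eq:weight}: for each fixed $y$ the weighted $L^t$ estimate on $\nabla w$ only yields $L^{t^*}$ by elementary Sobolev on a bounded set, and the extra gain from $t^*$ to $2^*(\sigma)$ comes precisely from the fact that the estimate holds for \emph{every} $y\in\Omega$. In \cite{DamSciunzi} this is exploited by inserting $\rho^{1/2}\rho^{-1/2}$ directly inside the Riesz representation $|w(x)|\le C_N\int |x-z|^{1-N}|\nabla w(z)|\,dz$, splitting $|x-z|^{1-N}$ between the two factors, and applying Hölder with exponents $2,2\sigma,\tfrac{2\sigma}{\sigma-1}$ in the $z$-variable, so that the second factor is $\big(\int \rho^{-\sigma}(z)|x-z|^{-\gamma}dz\big)^{1/(2\sigma)}\le\mathcal{C}^{1/(2\sigma)}$ \emph{uniformly in} $x$; the remaining power of $|x-z|$ is then handled by the mapping properties of the Riesz potential of lower order. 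A single off-the-shelf application of CKN with a fixed center $y$ cannot reproduce this, so the second step needs to be redone along those lines.
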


In particular, we use the previous result with the weight $\rho=A(|\nabla u|)$. Now we are ready to prove the weighted Poincar\'e type inequality.

\begin{proof}[Proof of Theorem \ref{WPI}] Choose $2< q< 2^*(\sigma)$. By H\"older inequality we get:
	\begin{equation}\label{poinstettema}
	\int_{\Omega}w^2\leq\left(\int_{\Omega}w^q\right)^\frac 2q |\Omega|^\frac{q-2}{q},
	\end{equation}
	and then  using Theorem \ref{thm: Sobolev} one has
	$$\int_{\Omega}w^2\leq {C_p}(\Omega) C_{s_\rho}^2 \int_{\Omega}\rho|\nabla w|^2.$$
	By \eqref{poinstettema} and direct computation it follows \eqref{wpi}.
	
\end{proof}

Now we recall some useful inequalities that hold for operators that satisfies ellipticity conditions (for the proof we refer to \cite{Da2, SZ}):  $ \forall \eta, \eta' \in \R^N$ there exist $C_1, C_2>0$ depending on $A$ such that
\begin{equation}\label{supportinequalities}
\begin{split}
\left( A(|\eta|)\eta -A(|\eta'|)\eta'\right) \left(\eta -\eta'\right) &\geq C_1 A(|\eta|+|\eta'|)|\eta	-\eta'|^2;\\
\left( A(|\eta|)\eta -A(|\eta'|)\eta'\right) \left(\eta -\eta'\right) &\leq C_2 A(|\eta|+|\eta'|)|\eta	-\eta'|.
\end{split}
\end{equation}

\begin{proof}[Proof of Theorem \ref{WCP}]
Let us consider the function 
$$w=\left( u-v\right)^+ \quad \text{in} \; \Omega'.$$
We observe that this function is bounded, $w=0$ on $\partial \Omega ^{\prime }$ and $w$ belongs to $W_{0,\rho}^{1,2}(\Omega)$; hence, it can be used as test function in \eqref{wequ1}. We have
\begin{eqnarray}\\\nonumber
\int_{\Omega'}\left( A(|\nabla u|)\nabla u-A(|\nabla v|)\nabla v,\nabla w\right) dx+\int_{\Omega'} \left( B\left( \left\vert \nabla u\right\vert \right) - B\left( \left\vert
\nabla v\right\vert \right) \right) w \, dx \\\nonumber
=\int_{\Omega'}\frac{f\left( u\right) -f\left( v\right) }{u-v} w^2 \, dx.
\end{eqnarray}
Using \eqref{supportinequalities} and the fact that $f$ is a locally Lipschitz continuous function, we obtain
\begin{equation}\label{i}
C_1\int_{\Omega'}A(\left\vert \nabla u\right\vert
+\left\vert \nabla v\right\vert) \vert \nabla w \vert^2 \, dx \leq \int_{\Omega'} \left| B\left(\left\vert \nabla u\right\vert \right) - B\left( \left\vert \nabla v\right\vert \right) \right| w \, dx + L_f\int_{\Omega'} w^2 \,dx,
\end{equation}
where $L_f>0$ is the Lipschitz constant. Moreover, using our assumptions \eqref{hpA4}, \eqref{hpB1}, \eqref{hpB3} on the operators $A(\cdot)$ and $B(\cdot)$, the mean value theorem and the weighted Young's inequality, we obtain that
\begin{equation}\label{i'}
\begin{split}
\int_{\Omega'} &\left\vert B\left( \left\vert \nabla u\right\vert \right) - B\left( \left\vert \nabla v\right\vert \right) \right\vert w \, dx \leq \int_{\Omega'}\left\vert B' (\xi ) \right\vert \vert \nabla w \vert w \, dx \\
\leq & \hat{C} (1+|M_A|)\int_{\Omega'}  \xi A(\xi)  \vert \nabla w \vert  w  \, dx \\
\leq & \hat{C} \bar{C} (1+|M_A|)\int_{\Omega'}  (|\nabla u| + |\nabla v|) A(|\nabla u| + |\nabla v|)  \vert \nabla w \vert  w  \, dx \\
\leq &  \hat{C} \bar{C} (1+|M_A|) \int_{\Omega'}  \sqrt{A(|\nabla u|+ |\nabla v|)} 	\ \vert \nabla w \vert \cdot (|\nabla u| + |\nabla v|) \sqrt{A(|\nabla u|+ |\nabla v|)} w  \, dx\\
\leq&   \vartheta \int_{\Omega'} A(\left\vert \nabla u\right\vert +\left\vert \nabla v\right\vert) \vert \nabla w \vert^2 \, dx \\
&+ \frac{\hat{C}^2 \bar{C}^2 (1+|M_A|)^2}{4\vartheta} \int_{\Omega'} (|\nabla u|+ |\nabla v|)^2A(\left\vert \nabla u\right\vert +\left\vert \nabla v\right\vert) w^2 \,dx \\
\leq& \vartheta \int_{\Omega'}A(\left\vert \nabla u\right\vert +\left\vert \nabla v\right\vert) |\nabla w|^2 \, dx + C(M_A, \|\nabla u\|_{L^\infty(\Omega')}, \vartheta) \int_{\Omega'} w^2 \, dx,
\end{split}
\end{equation}
where $\xi$ is an intermediate point between $|\nabla u|$ and $|\nabla v|$ and in the last line of \eqref{i'} we used the fact that the function $tA(t)$ is locally bounded. Hence, by \eqref{i} and \eqref{i'}, we have
\begin{equation}\label{ii} 
\begin{split}
C_1\int_{\Omega'}A(\left\vert \nabla u\right\vert
+\left\vert \nabla v\right\vert ) |\nabla w|^2 \, dx \leq& \vartheta \int_{\Omega'} A(\left\vert \nabla u\right\vert +\left\vert \nabla v\right\vert ) |\nabla w|^2 dx\\
&+\left[L_f + C(M_A, \|\nabla u\|_{L^\infty(\Omega')}, \vartheta)\right] \int_{\Omega'} w^2 \, dx.
\end{split}
\end{equation}
We fix $\vartheta>0$ sufficiently small, such that
$$\vartheta < C_1.$$
Hence, by \eqref{ii} it follows that
\begin{equation} 
\int_{\Omega'} A(\left\vert \nabla u\right\vert +\left\vert \nabla v\right\vert ) |\nabla w|^2 \, dx \leq \frac{L_f + C(M_A, \|\nabla u\|_{L^\infty(\Omega')}, \vartheta)}{C_1-\vartheta} \int_{\Omega'} w^2 \, dx.
\end{equation}
Using the above inequality, by Theorem \ref{WPI}, we obtain
\begin{equation}
\begin{split}
\int_{\Omega'} A(|\nabla u| + |\nabla v|) \vert \nabla w \vert^2 \, dx \leq \frac{L_f + C(M_A, \|\nabla u\|_{L^\infty(\Omega')}, \vartheta)}{C_1-\vartheta} C_{P}\left( \Omega ^{\prime }\right) \int_{\Omega'} A(|\nabla u| ) \vert \nabla w \vert^2 \, dx,
\end{split}
\end{equation}
so that, using \eqref{hpA4}, it follows
\begin{equation}
\left[1-\frac{L_f + C(M_A, \|\nabla u\|_{L^\infty(\Omega')}, \vartheta)}{C_1-\vartheta} \bar C C_{P}\left( \Omega' \right) \right] \int_{\Omega'} A(|\nabla u|+|\nabla v|) |\nabla w|^2 \, dx \leq 0.
\end{equation}
Now we note that there exists $\delta=\delta(\vartheta, f, M_A)>0$ small enough such that $|\Omega'| \leq \delta$ and
\begin{equation}
\frac{L_f + C(M_A, \|\nabla u\|_{L^\infty(\Omega')}, \vartheta)}{C_1-\vartheta} \ \bar{C} C_{P}\left( \Omega' \right) <1.
\end{equation}
Then $\left( u-v\right)^+=0$ in $\Omega'$, i.e. $u\leq v$ in $\Omega'$.

\end{proof}

\section{Symmetry and monotonicity result}\label{sec3}

In this section we prove our symmetry (and monotonicity) result. Actually we provide the details needed for the application of the \emph{moving plane method}. For the semilinear case  see \cite{BN,GNN}, in the quasilinear setting we use an adaptation of the technique developed in \cite{Da2,DP,DS1}. \\

\noindent We start with some notation: for a real number $\lambda$ we set
\begin{equation}\label{eq:sn2}
\Omega_\lambda=\{x\in \Omega:x_1 <\lambda\}
\end{equation}
\begin{equation}\label{eq:sn3}
x_\lambda= R_\lambda(x)=(2\lambda-x_1,x_2,\ldots,x_n)
\end{equation}
which is the reflection through the hyperplane $T_\lambda :=\{x\in
\mathbb R^n :  x_1= \lambda\}$. Also let
\begin{equation}\label{eq:sn4}
a=\inf _{x\in\Omega}x_1.
\end{equation}
We set
\begin{equation}\label{eq:sn33}
u_\lambda(x)=u(x_\lambda)\,.
\end{equation}
We recall that problem \eqref{equ2} is invariant up to isometries and hence we have that $u_\lambda$ satisfies
\begin{equation}  \label{wequlambda}
\int_{\Omega_\lambda} A(|\nabla u_\lambda|)\left(\nabla u_\lambda ,\nabla \psi \right) \, dx +\int_{\Omega_\lambda} B\left( |\nabla u_\lambda|\right) \psi \,dx=\int_{\Omega_\lambda} f(u_\lambda) \psi
\,dx
\end{equation}
for every $\psi \in C_{c}^{\infty }(\Omega_\lambda)$.

Finally we define
\begin{equation}\nonumber
\Lambda_0=\{a<\lambda<0 : u\leq
u_{t}\,\,\,\text{in}\,\,\,\Omega_t\,\,\,\text{for all
	$t\in(a,\lambda]$}\}\,.
\end{equation}
In the following the critical set $\mathcal{Z}_u$ will play a crucial role. Now we prove a very useful tool that we will use in the proof of our symmetry result.

\begin{lem}\label{conncomp}
	Let $u \in C^{1,\alpha}(\overline{\Omega})$ be a solution to \eqref{equ2} and $a < \lambda < 0$. If $\mathcal{C}_\lambda \subset \Omega_\lambda \setminus \mathcal{Z}_u$ is a connected component of $\Omega_\lambda \setminus \mathcal{Z}_u$ and $u \equiv u_\lambda$ in $\mathcal{C}_\lambda$, then
	$$\mathcal{C}_\lambda= \emptyset.$$
\end{lem}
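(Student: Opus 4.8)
## Proof proposal for Lemma \ref{conncomp}

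The plan is to argue by contradiction. Suppose $\mathcal{C}_\lambda \neq \emptyset$, and consider the set $\widetilde{\mathcal{C}}_\lambda := R_\lambda(\mathcal{C}_\lambda)$, the reflection of $\mathcal{C}_\lambda$ through $T_\lambda$. Since $u \equiv u_\lambda$ on $\mathcal{C}_\lambda$ and $\nabla u \neq 0$ there, we also have $\nabla u_\lambda \neq 0$ on $\mathcal{C}_\lambda$, hence $\nabla u \neq 0$ on $\widetilde{\mathcal{C}}_\lambda$, so both $\mathcal{C}_\lambda$ and $\widetilde{\mathcal{C}}_\lambda$ are open sets where the equation \eqref{equ1} is non-degenerate and $u$ is of class $\mathcal{C}^2$. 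The key geometric observation is that on $\partial \mathcal{C}_\lambda$ one has either a point of $\mathcal{Z}_u$ (where $\nabla u = 0$, so by $u\equiv u_\lambda$ also $\nabla u_\lambda = 0$), or a point of $T_\lambda$, or a point of $\partial\Omega$; in all cases one checks $u = u_\lambda$ and $\nabla u = \nabla u_\lambda$ on $\partial\mathcal{C}_\lambda$ (on $T_\lambda$ this is automatic since $u_\lambda = u$ there and the tangential derivatives agree while the normal ones are negatives of each other — so actually one should be careful and instead glue across $T_\lambda$).

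The cleaner route I would take: define $w$ on $\mathcal{C}_\lambda \cup (\mathcal{C}_\lambda \cap T_\lambda) \cup \widetilde{\mathcal{C}}_\lambda$ by $w = u$ on $\overline{\mathcal{C}_\lambda}$ and $w = u_\lambda$ on $\widetilde{\mathcal{C}}_\lambda$; using $u \equiv u_\lambda$ on $\mathcal{C}_\lambda$ one verifies $w \in C^1$ across $T_\lambda$ and that $w$ solves \eqref{equ1} in the open connected non-degenerate set $\mathcal{D} := \mathcal{C}_\lambda \cup (\partial\mathcal{C}_\lambda \cap T_\lambda)^{\circ} \cup \widetilde{\mathcal{C}}_\lambda$ in the weak (hence classical $C^2$, by elliptic regularity off $\mathcal{Z}_u$) sense, with $\nabla w \neq 0$ throughout. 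But $w$ is invariant under $R_\lambda$ by construction, so $w_{x_1} = 0$ on $\mathcal{D} \cap T_\lambda$, contradicting $\nabla w \neq 0$ there — unless $\mathcal{D} \cap T_\lambda = \emptyset$. So the real content is to rule out the case $\overline{\mathcal{C}_\lambda} \cap T_\lambda = \emptyset$, i.e.\ $\mathcal{C}_\lambda$ is a connected component of $\Omega_\lambda \setminus \mathcal{Z}_u$ whose closure does not touch $T_\lambda$. In that case $\partial \mathcal{C}_\lambda \subset \mathcal{Z}_u \cup \partial\Omega$, and since $u > 0$ in $\Omega$ while on $\partial\Omega \cap \partial\mathcal{C}_\lambda$ we would have $u = 0$ and $u_\lambda = u(R_\lambda(\cdot)) > 0$ (as $R_\lambda$ maps $\partial\Omega \cap \Omega_\lambda$ into $\Omega$ by strict convexity — here I would invoke the domain geometry exactly as in the moving plane setup), the continuity $u \equiv u_\lambda$ on $\overline{\mathcal{C}_\lambda}$ forces $\partial\mathcal{C}_\lambda \cap \partial\Omega = \emptyset$, hence $\partial \mathcal{C}_\lambda \subset \mathcal{Z}_u$, so on all of $\partial\mathcal{C}_\lambda$ we have $\nabla u = \nabla u_\lambda = 0$.

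Now $v := u - u_\lambda \equiv 0$ on $\mathcal{C}_\lambda$ with $v \in C^1(\overline{\mathcal{C}_\lambda})$, $v = 0$, $\nabla v = 0$ on $\partial\mathcal{C}_\lambda$; I would use a unique continuation / strong maximum principle argument for the linearized operator $L_u$ from \eqref{linequ} — which is uniformly elliptic on compact subsets of $\Omega \setminus \mathcal{Z}_u$ — together with the fact that $\mathcal{C}_\lambda$ is connected and $\mathcal{Z}_u$ has measure zero (Corollary \ref{theoreg2}) to propagate $v \equiv 0$ from a neighborhood of $\partial\mathcal{C}_\lambda$ into all of $\Omega_\lambda$, eventually reaching points near $T_\lambda$ and deriving a contradiction with $u < u_\lambda$ or with the boundary behavior. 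Alternatively, and more in the spirit of \cite{BEV, EMM, EMS, MMPS} cited in the introduction, one shows directly that such a "floating" component cannot exist: by connectedness of $\Omega$ and a covering/chain argument through the zero-measure set $\mathcal{Z}_u$, any component of $\Omega_\lambda \setminus \mathcal{Z}_u$ on which $u \equiv u_\lambda$ must have $u \equiv u_\lambda$ on its closure and then on an adjacent component, ultimately forcing $u \equiv u_\lambda$ on a component abutting $T_\lambda$, which is the case already handled.

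\textbf{Main obstacle.} The delicate point is the regularity of $\mathcal{Z}_u$: $\partial\mathcal{C}_\lambda$ may be a very irregular subset of $\mathcal{Z}_u$, so one cannot naively apply Hopf's lemma or a classical boundary-point argument there. The argument must route around this using only that $|\mathcal{Z}_u| = 0$ and the $C^{1,\alpha}$ regularity of $u$ (so that $\nabla u$, $\nabla u_\lambda$ vanish continuously on $\partial\mathcal{C}_\lambda$), mimicking the treatment of connected components in the $p$-Laplacian moving-plane literature; making the "propagation of $v \equiv 0$ across $\mathcal{Z}_u$" step rigorous is where the real work lies.
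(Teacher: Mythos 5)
Your approach is genuinely different from the paper's, and it has two serious gaps.

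\textbf{A different route.} The paper's proof is a direct integral estimate, not a unique-continuation or reflection-gluing argument. Setting $\mathcal C:=\mathcal C_\lambda\cup R_\lambda(\mathcal C_\lambda)$, it tests the weak formulation of \eqref{equ2} with
\[
\Psi_\varepsilon=e^{-ku}\,\frac{G_\varepsilon(|\nabla u|)}{|\nabla u|}\,\chi_{\mathcal C},
\]
a test function admissible because $\nabla u$ vanishes continuously on the outer boundary of $\mathcal C$ (and $\Psi_\varepsilon$ glues across $T_\lambda$ by the symmetry $u\equiv u_\lambda$ on $\mathcal C$). Since $f(u)\ge\gamma>0$ on $\overline{\mathcal C}\subset\subset\Omega$, the right-hand side produces the strict lower bound $0<\gamma\int_{\mathcal C}\Psi_\varepsilon$. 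The $e^{-ku}$ factor, combined with \eqref{hpB1}, \eqref{hpB3}, \eqref{hpA4} to get $B(t)\le \hat C\,t^2A(t)$, absorbs the first-order term $B$; the remaining Hessian term is shown to vanish as $\varepsilon\to 0$ by dominated convergence, using Theorem~\ref{theoreg1} to give $A(|\nabla u|)\|D^2u\|\in L^1(\mathcal C)$. Passing to the limit gives $0\ge\gamma\int_{\mathcal C}e^{-ku}>0$, a contradiction. This is fully rigorous precisely because it never needs any geometric information about $\partial\mathcal C_\lambda\cap\mathcal Z_u$ beyond the integrability estimate.

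\textbf{Gaps in your proposal.} First, the opening ``gluing'' step contains a logical error: from the $R_\lambda$-invariance of the glued function $w$ you conclude $w_{x_1}=0$ on $\mathcal D\cap T_\lambda$, and you claim this contradicts $\nabla w\neq 0$. But $w_{x_1}=0$ does not imply $\nabla w=0$; the tangential derivatives can be nonzero, so there is no contradiction. At a generic $\lambda$ (the lemma is stated for arbitrary $a<\lambda<0$, not just $\lambda_0$), there is no a priori monotonicity or Hopf-type estimate on $T_\lambda$ that would force $u_{x_1}\neq 0$ there. Second, and more fundamentally, the ``propagation of $v\equiv 0$ across $\mathcal Z_u$'' step that you flag as ``where the real work lies'' is not a minor technicality that can be patched: the linearized operator $L_u$ degenerates exactly on $\mathcal Z_u$, $\partial\mathcal C_\lambda$ may be an extremely irregular subset of it, and no unique continuation principle across the degenerate set is available (this is precisely why these moving-plane results are hard). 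Finally, and this is the telltale sign of a structural gap, your argument nowhere uses the hypothesis $f(s)>0$ for $s>0$, which is essential in the paper's proof and without which the conclusion of the lemma cannot be expected to hold. Any complete proof must invoke the sign of $f$, and the paper does so through the strict positivity $0<\gamma\int_{\mathcal C}\Psi_\varepsilon$ at the very first step.
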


\begin{proof}
	Let
	$$\mathcal{C}:=\mathcal{C}_\lambda \cup R_\lambda (\mathcal{C}_\lambda).$$
	Arguing by contradiction we assume $\mathcal{C} \neq \emptyset$.

	For all $\varepsilon>0$, let us define  $G_\varepsilon:\mathbb{R}^+_0\rightarrow\mathbb{R}$ by setting
	\begin{equation}\label{eq:G}
	G_\varepsilon(t)=\begin{cases} 0 & \text{if $0\leq t\leq \varepsilon$}  \\
	2t-2\varepsilon& \text{if $\varepsilon\leq t\leq2\varepsilon$}
	\\ t & \text{if  $t\geq 2\varepsilon$}.
	\end{cases}
	\end{equation}
	\noindent Let $\chi_{\mathcal A}$ be the characteristic function of a set $\mathcal A$.
	We define
	\begin{equation}\label{eq:concettinaanalitica}
	\Psi_{\varepsilon}\,:=\,e^{-k u}\frac{G_\varepsilon(|\nabla u|)}{|\nabla u|}\chi_{\mathcal{C}},
	\end{equation}
	where $k$ is any positive number to be chosen later. 
	
	We point out that supp$(\Psi_\varepsilon) \subset \mathcal {C}$, which implies    $\Psi_\varepsilon \in W^{1,p}_0( \mathcal {C})$. Indeed by definition of $\mathcal C$ we have that $\nabla u=0$ on $\partial \mathcal{C}$. Moreover using the test function $\Psi_{\varepsilon}$ defined in \eqref{eq:concettinaanalitica}, we are able to integrate on the boundary $\partial \mathcal {C}$ which could be not regular. 
	
	Hence, we obtain
	\begin{equation}\label{qqqqqqbissete}
	\int_{\mathcal C}\, A\left(|\nabla u|\right) \left(\nabla u ,\nabla\Psi_{\varepsilon}\right) \, dx + \int_{\mathcal C} B\left(|\nabla u|\right) \Psi_{\varepsilon} \, dx=\int_{\mathcal C} f(u)\Psi_{\varepsilon} dx.
	\end{equation}
	Since $\overline{\mathcal C}\subset \Omega$ and $f$ is a locally Lipschitz continuous positive function we have that there exists $\gamma>0$ such that 
	\[f(u)\geq \gamma.\]
	Hence
	\begin{equation}\label{qqqqqqbissete2}
	0<\gamma \int_{\mathcal C} \Psi_{\varepsilon} dx \leq  \int_{\mathcal C} f(u) \Psi_{\varepsilon} dx = \int_{\mathcal C}\, A\left(|\nabla u|\right) \left(\nabla u ,\nabla\Psi_{\varepsilon}\right) \, dx + \int_{\mathcal C} B\left(|\nabla u|\right) \Psi_{\varepsilon} \, dx.
	\end{equation}
	We set ${\displaystyle h_\varepsilon (t)=\frac{G_\varepsilon(t)}{t}}$, meaning that $h_\varepsilon(t)=0$ for $0\leq t\leq \varepsilon$. We observe that
	\begin{equation}
	\begin{split}
	\int_{\mathcal C}\, A\left(|\nabla u|\right) \left(\nabla u ,\nabla\Psi_{\varepsilon}\right) \, dx =& \int_{\mathcal C}e^{-k u}A(|\nabla u|) \left(\nabla u, \nabla \frac{G_\varepsilon(|\nabla u|)}{|\nabla u|}\right) \, dx \\
	&-k \int_{\mathcal C}e^{-k u}A(|\nabla u|) |\nabla u|^2 \frac{G_\varepsilon(|\nabla u|)}{|\nabla u|} \, dx.
	\end{split}
	\end{equation}
	
	We obtain that the first term of the right hand side of \eqref{qqqqqqbissete2} is estimated by the following:
	\begin{equation}\label{eq:smm3}
	\begin{split}
	\left| \int_{\mathcal C}e^{-k u}A(|\nabla u|) \left(\nabla u, \nabla \frac{G_\varepsilon(|\nabla u|)}{|\nabla u|}\right) \, dx\right| &\leq \int_{\mathcal C}A(|\nabla u|)|\nabla u| \ |h_\varepsilon'(|\nabla u|)||\nabla (|\nabla u|)|dx\\
	&\leq C \int_{\mathcal C}A(|\nabla u|) \Big(|\nabla u| h_\varepsilon'(|\nabla u|)\Big)\|D^2 u\| dx,
	\end{split}
	\end{equation}
	where $\|D^2 u\|$ denotes the Hessian  norm and $C$ a positive constant.
	
	\
	
	\noindent We let $\varepsilon\rightarrow 0$.
	To this aim, let us first show  that
	\begin{itemize}
		\item [$(i)$] $A(|\nabla u|) \|D^2 u\| \in L^1(\mathcal C);$
		
		\
		
		\item [$(ii)$] $|\nabla u|h_\varepsilon'(|\nabla u|)\rightarrow 0$ a.e. in $\mathcal C$ as $\varepsilon \rightarrow 0$ and $|\nabla u|h_\varepsilon'(|\nabla u|)\leq C$ with $C$ not depending on $\varepsilon$.
	\end{itemize}
	Let us  prove $(i)$. First of all we note that there exists $0 \leq \beta < 1$ such that $t^\beta A(t)$ is locally bounded. In particular, since $A(\cdot) \in \mathcal{C}^1(\R^+)$ and it holds \eqref{hpA1}, there exist $h:=h(u, \mathcal{C})>0$ and $0 \leq  \beta(\tilde \vartheta) <1$ such that $t^\beta A(t) \leq C$ for all $t \in [0,h]$ and for some $C>0$. Hence, using this information and by H\"older's inequality it follows
	\begin{equation}\label{eq:smm4}
	\begin{split}
	&\int_{\mathcal C} A(|\nabla u|) \|D^2u\| dx\leq \sqrt{|\mathcal C|}\left( \int_{\mathcal C} \left(A(|\nabla u|)\right)^2 \|D^2u\|^2 dx \right)^{\frac 12}\\
	\leq&C \left( \int_{\mathcal C}\frac{A(|\nabla u|)}{|u_i|^\beta}\|D^2u\|^2A(|\nabla u|) |u_i|^\beta dx \right )^{\frac 12}\\ 
	\leq& C \left\||\nabla u|^\beta A(|\nabla u|)\right\|_{L^{\infty}(\Omega)}^{\frac 12} \left(\int_{\mathcal C} \frac{A(|\nabla u|)}{|u_i|^\beta}\|D^2 u\|^2 dx \right)^{\frac 12},
	\end{split}
	\end{equation}
	where $C$ a positive constant. 
	
	\noindent Using Theorem \ref{theoreg1}, we infer that
	$$\left(\int_{\mathcal C} \frac{A(|\nabla u|)}{|u_i|^\beta}\|D^2 u\|^2 dx \right)^{\frac 12}\leq \mathcal{C}.$$ 
	Then, by \eqref{eq:smm4} we obtain
	$$\int_{\mathcal C} \frac{A(|\nabla u|)}{|u_i|^\beta}\|D^2 u\|^2 dx \leq \mathcal{\tilde C}.$$
	
	\
	
	\noindent Let us prove $(ii)$. Recalling \eqref{eq:G}, we obtain
	$$
	h'_\varepsilon(t)=
	\begin{cases} 0 & \text{if $0< t \leq \varepsilon$}  \\
	\frac{2\varepsilon}{t^2}& \text{if $\varepsilon< t<2\varepsilon$}
	\\ 0 & \text{if  $t\geq  2\varepsilon$} ,
	\end{cases}
	$$
	and, then, $|\nabla u|h_\varepsilon'(|\nabla u|)$ tends to $0$
	almost everywhere in $\mathcal C$ as $\varepsilon$ goes to $0$ and
	$|\nabla u|h_\varepsilon'(|\nabla u|)\leq 2$. Hence, by \eqref{eq:smm3}, we obtain that  \eqref{qqqqqqbissete2} becomes
	\begin{equation}\label{semifinal}
	\begin{split}
	0<\gamma \int_{\mathcal C} \Psi_{\varepsilon} dx \leq&  C \int_{\mathcal C}A(|\nabla u|) \Big(|\nabla u| h_\varepsilon'(|\nabla u|)\Big)\|D^2 u\| dx \\
	&-k \int_{\mathcal C}e^{-k u}A(|\nabla u|) |\nabla u|^2 h_\varepsilon(|\nabla u|) \, dx + \int_{\mathcal C}e^{-k u} B\left(|\nabla u|\right) h_\varepsilon(|\nabla u|)  \, dx.\\
	=&C \int_{\mathcal C}A(|\nabla u|) \Big(|\nabla u| h_\varepsilon'(|\nabla u|)\Big)\|D^2 u\| dx \\
	&+\int_{\mathcal C} \left(B\left(|\nabla u|\right) -k |\nabla u|^2 A(|\nabla u|)  \right) e^{-k u}h_\varepsilon(|\nabla u|) \, dx 
	\end{split}
	\end{equation}
	By \eqref{hpB1}  and \eqref{hpB3}, it is possible to deduce that there exists $h:=h(u,\Omega)>0$
	\begin{equation}\label{hpBII}
	B(t)=\int_0^tB'(s)\, ds \leq \int_0^t B'(t)\, ds \leq \hat{C} \cdot tA(t) [s]_0^t=\hat{C} \cdot t^2A(t),
    \end{equation}
	for all $t \in (0,h]$.
	
	Now, taking $h:=\|\nabla u\|_{L^\infty(\Omega)}$ and $k:=\hat{C}$, by \eqref{hpBII} we have that
	$$B\left(|\nabla u|\right) -k |\nabla u|^2 A(|\nabla u|)  \leq 0 \quad \text{in } \mathcal{C}\subset \Omega,$$
	hence \eqref{semifinal} becomes
	\begin{equation}\label{final}
	0<\gamma \int_{\mathcal C} \Psi_{\varepsilon} dx \leq  C \int_{\mathcal C}A(|\nabla u|) \Big(|\nabla u| h_\varepsilon'(|\nabla u|)\Big)\|D^2 u\| dx \\
	\end{equation}
	Finally, by the Lebesgue's dominate convergence theorem,   passing to the limit for $\varepsilon \rightarrow 0$ in \eqref{final} we obtain
	\[
	0 \geq \gamma \int_{\mathcal C} e^{-ku} dx >0.
	\]
	This gives a contradiction, hence $\mathcal{C}=\emptyset$. 
%
\end{proof}


Now having in force Lemma \ref{conncomp}, we are ready to prove our symmetry result.

\begin{proof}[Proof of Theorem \ref{symmetrybdd}]

	The proof follows via the \emph{moving plane technique}. As remarked above we recall that problem \eqref{equ2} is invariant up to isometries and hence we have that $u_\lambda$ weakly satisfies problem \eqref{equ2} according to \eqref{wequlambda}. We start showing that:
	$$\Lambda_0 \neq \emptyset\,.$$
	To prove this, let us consider $\lambda>a$ with $\lambda-a$ small enough. Let us note that, by the Hopf's boundary lemma (see \cite[Theorem 5.5.1]{PSB}) applied to a solution $u$ of problem \eqref{equ2}, we know that
	\[
	\mathcal{Z}_u\subset\subset\Omega\,.
	\]
	In particular, by this fact we deduce that $\Omega_\lambda \setminus \mathcal{Z}_u$ is connected and  that
	$$\frac{\partial u}{\partial x_1} > 0 \quad \text{in} \quad
	\Omega_\lambda \cup R_\lambda(\Omega_\lambda),$$ and this immediately proves that $u <
	u_\lambda$ in $\Omega_\lambda$.
	
	Now we can define $$\lambda_0 := \sup \Lambda_0.$$
	We shall  show that $u \leq u_\lambda$ in $\Omega_\lambda$
	for every $\lambda \in (a,0]$, namely that:
	$$\lambda_0 = 0\,.$$

	To do this we assume that $\lambda_0 < 0$ and we reach a contradiction by proving
	that $u \leq u_{\lambda_0 + \tau}$ in $\Omega_{\lambda_0 + \tau}$ for any $0 < \tau <
	\bar{\tau}$ for some small $\bar{\tau}>0$. 
	
	As a consequence of Corollary \ref{theoreg2} we remark that $|\mathcal{Z}_{u}|=0$, see also  \cite{DamSciunzi}. Let $\mathcal{A}\subset \Omega_{\lambda_0}$ be an open set such that $\mathcal Z_u\cap\Omega_{\lambda_0}\subset \mathcal{A} \subset \subset \Omega$. Such set exists by the Hopf's boundary Lemma. Moreover note  that, since $|\mathcal Z_u|=0$, we can take $\mathcal{A}$ of arbitrarily small measure. By 	continuity we know that $u \leq u_{\lambda_0}$ in $\Omega_{\lambda_0}$.
	We can exploit the strong comparison principle, see e.g. \cite[Theorem 2.5.2]{PSB}  to get that, in any connected component of $\Omega_{\lambda_{0}}\setminus \mathcal Z_u$, we have
	$$u<u_{\lambda_0} \qquad\text{or}\qquad u\equiv u_{\lambda_0}.$$
	The case $u\equiv u_{\lambda_0}$ in some  connected component
	$\mathcal{C}$ of $\Omega_{\lambda_{0}}\setminus \mathcal Z_u$ is not
	possible, since by symmetry, it would imply the existence of  a local symmetry phenomenon and consequently that $\Omega_{\lambda_0} \setminus \mathcal  Z_u$ would be not connected,  in  spite of what we proved in Lemma \ref{conncomp}. Hence we deduce that $u <
	u_{\lambda_0}$ in $\Omega_{\lambda_0}$. Therefore, given a compact set $\mathcal{K} \subset
	\Omega_{\lambda_0} \setminus \mathcal{A}$, by
	uniform continuity we can ensure that $u < u_{\lambda_0+\tau}$ in
	$\mathcal{K}$ for any $0 < \tau < \bar{\tau}$ for some small $\bar{\tau}>0$.
    Now we choose $\bar \tau>0$ sufficiently small and $|\mathcal{K}|$ big enough such that $|\Omega_{\lambda_0+\tau} \setminus \mathcal{K}| < \delta$, where $\delta>0$ is given by Theorem \ref{WCP} and for any $0<\tau < \bar \tau$. Moreover, noticing that $u \leq u_{\lambda_0 + \tau}$ on $\partial \left(\Omega_{\lambda_0+\tau} \setminus \mathcal{K}\right)$ , by Theorem \ref{WCP} it follows that
    $$u \leq u_{\lambda_0 + \tau} \qquad \text{in} \; \Omega_{\lambda_0+\tau} \setminus \mathcal{K},$$
    for any $0<\tau < \bar \tau$ and taking into account that $u < u_{\lambda_0+\tau}$ in $\mathcal{K}$, we have
    $$u \leq u_{\lambda_0 + \tau} \qquad \text{in} \; \Omega_{\lambda_0+\tau},$$
    for any $0<\tau < \bar \tau$. This fact gives a contradiction with the definition of $\lambda_0$, and so we have
    $$\lambda_0=0.$$
    Since the moving plane procedure can be performed in the same way
	but in the opposite direction, then this proves the desired symmetry
	result. The fact that the solution is increasing in the
	$x_1$-direction in $\{x_1 < 0\}$ is implicit in the moving plane
	procedure.

	Finally, if $\Omega$ is a ball, repeating this argument along any direction, it follows that $u$ is radially symmetric. The fact that
	$\displaystyle \frac{\partial u}{\partial r}(r)<0$ for $r\neq 0$, follows by the Hopf's boundary lemma which works in this case since the level sets are balls
	and therefore fulfill the interior sphere condition.
\end{proof}

\end{document}